\newtheorem{theorem}{Theorem}[section]
\newtheorem{lemma}[theorem]{Lemma}
\newtheorem{observation}[theorem]{Observation}
\newtheorem{thm}{Theorem}[section]
\newtheorem{definition}[thm]{Definition}
\title{Planar graphs without 7-cycles and butterflies are DP-4-colorable}
\author{Seog-Jin Kim$^{1}$, Runrun Liu$^{2}$,    Gexin Yu$^{2,3}$}
\address{
$^{1}$\small Department of Mathematics Education, Konkuk University, Seoul, 05029,
Korea. \\
$^{2}$\small Department of Mathematics, Central China Normal University, Wuhan, Hubei, China.\\
$^3$\small Department of Mathematics, The College of William and Mary, Williamsburg, VA, 23185, USA.
}
\email{skim12@konkuk.ac.kr (S.-J. Kim), \ 827261672@qq.com (R. Liu), \ gyu@wm.edu (G. Yu)}
\begin{document}
\maketitle

\begin{abstract}
DP-coloring (also known as correspondence coloring) is a generalization of list coloring, introduced by Dvo\v{r}\'ak and Postle in 2017. It is well-known that there are non-4-choosable planar graphs. Much attention has recently been put on sufficient conditions for planar graphs to be DP-$4$-colorable.  In particular, for each $k \in \{3, 4, 5, 6\}$, every planar graph without $k$-cycles is DP-$4$-colorable. In this paper, we prove that every planar graph without $7$-cycles and butterflies is DP-$4$-colorable.  Our proof can be easily modified to prove other sufficient conditions that forbid clusters formed by many triangles.
\end{abstract}

\section{Introduction}

We only consider simple graphs in this article.  A \emph{list assignment} $L$ of a graph $G=(V,E)$  is a mapping that assigns each vertex $u$ of $G$ a set of colors $L(u)$. An \emph{$L$-coloring} of $G$ is a proper coloring $f$ of $V(G)$ such that $f(u) \in L(u)$ for any $u \in V(G)$. A list assignment $L$ is called a \emph{$t$-list assignment} if $|L(u)| \geq t$ for any $u \in V(G)$.
A graph $G$ is \emph{$t$-choosable} if $G$ admits an $L$-coloring for each $t$-list assignment $L$.  The \emph{list-chromatic number} (or the \emph{choice number}) of $G$, denoted by $\chi_{\ell}(G)$, is the minimum integer $t$ such that $G$ is $t$-choosable.

Thomassen \cite{Thomassen} showed that every planar graph is 5-choosable, and Voigt \cite{Voigt} showed that there are planar graphs that are not 4-choosable.  This makes it an interesting question to determine which planar graphs are $4$-choosable.

A graph is said to be \emph{$\ell$-degenerate}  if each of its subgraph contains a vertex of degree at most $\ell$. Let $C_k$ be the cycle with $k$ vertices.   For each $k \in \{3, 5, 6\}$, it is shown that a planar graph without $C_k$ is 3-degenerate, thus $4$-choosable, see \cite{FJMS02, LSX01, Lam, WL02}.  It is further shown in \cite{F09, Lam} that for each $k\in \{4,7\}$, planar graphs without $k$-cycles is $4$-choosable.  The proof for the case of $7$-cycle is quite involved.

Some powerful tools (for example, vertex identification) in coloring are not feasible for list coloring.  In an effort to overcome this,  Dvo\^{r}\'{a}k and Postle \cite{DP18} introduced the notion of $DP$-coloring, which is a generalization of list coloring. By using this notion,  they solved a long-standing conjecture of Borodin~\cite{B13} on list coloring of planar graphs.

\begin{definition}
Let $G$ be a graph with $n$ vertices and let $L$ be a list assignment for $V(G)$. For each edge $uv$ in $G$, let $M_{uv}$ be a matching between the sets $L(u)$ and $L(v)$ and let $\mathcal{M}_L = \{ M_{uv} : uv \in E(G)\}$ (called a matching assignment). Let $G_{L}$ be the graph that satisfies the following conditions
\begin{itemize}
\item each $u\in V(G)$ corresponds to a set $L_u=\{(u,x): x\in L(u)\}$ of vertices in $G_L$;
\item for all $u \in V(G)$, the set $L_u$ forms a clique in $G_L$;
\item if $uv \in E(G)$, then the edges between $L_u$ and $L_v$ are those of $M_{uv}$; and
\item if $uv \notin E(G)$, then there are no edges between $L_u$ and $L_v$.
\end{itemize}
If $G_L$ contains an independent set of size $n$, then $G$ has an {\em $\mathcal{M}_L$-coloring}. The graph $G$ is {\em DP-$k$-colorable} if, for any matching assignment $\mathcal{M}_L$ in which $L(u)\supseteq[k]:=\{1,2,\ldots, k\}$ for each $u \in V(G)$, it has an $\mathcal{M}_L$-coloring. The minimum value of $k$ such that $G$ is DP-$k$-colorable is the {\em DP-chromatic number} of $G$, denoted by $\chi_{DP}(G)$.
\end{definition}

Let $\mathcal{M}_L$ be a matching assignment for $G$. An edge $uv\in E(G)$ is {\em straight} if every $(u,c_1)(v,c_2)\in E(M_{uv})$ satisfies $c_1=c_2$. If all the edges under $M_L$ are straight, then an $M_L$-coloring is exactly a list-coloring.  So DP-coloring is a generalization of list-coloring.  Since more vertices in $L_u$ will only make it easier to find an independent set of size $n$, we may assume that each of $L_u$ has size $k$, namely $L_u=\{(u,i): i\in[k]\}$.  The elements in $L_u$ sometimes are still called colors of $u$.  We may also assume that $M_{uv}$ for each $uv\in E(G)$ is a perfect matching.

Dvo\^{r}\'{a}k and Postle \cite{DP18} observed that Thomassen's proof also implies that planar graphs are DP-$5$-colorable.  As a planar graph without $k$-cycles with $k\in \{3,5,6\}$ is $3$-degenerate, it is also DP-$4$-colorable. Kim and Ozeki \cite{KO18} showed that planar graphs without $4$-cycles are DP-4-colorable.  More sufficient conditions for a planar graph to be $DP$-4-colorable have been found in \cite{CLYZZ18+, KO18, KY17, LL19, LLNSY18+}, and we summarize them below.

\begin{theorem} (\cite{CLYZZ18+, KO18, KY17, LL19, LLNSY18+})
The following planar graphs are DP-4-colorable
\begin{itemize}
\item without $k$-cycles, where $k\in \{3,4,5,6\}$, or
\item without $k$-cycles adjacent to $k'$-cycles, where $(k,k')\in \{(3,4), (3,5), (3,6), (4, 5), (4,6)\}$, and two cycles are adjacent if they share an edge.
\end{itemize}
\end{theorem}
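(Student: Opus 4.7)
Theorem~1 collects results proved in the cited papers; all of them follow a common template, so my plan is to outline that template and indicate the role played by each forbidden-cycle (respectively, forbidden cycle-adjacency) hypothesis. For concreteness I describe the argument for the representative case ``every planar graph without $5$-cycles is DP-$4$-colorable''; the remaining bullets differ only in which local configurations can be excluded.

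Suppose for contradiction that the claim fails, and let $G$ be a plane counterexample minimizing $|V(G)|+|E(G)|$, equipped with a matching assignment $\mathcal{M}_L$ with $|L(v)|\geq 4$ for every $v$ and no $\mathcal{M}_L$-coloring. First I would establish structural (\emph{reducibility}) lemmas: by standard arguments $\delta(G)\geq 3$, no two $3$-vertices are adjacent, and certain $3$- and $4$-face configurations with low-degree boundary cannot occur. Each reducibility claim removes a small subgraph $H$, applies the minimality of $G$ to DP-$4$-color $G-H$, and extends the coloring to $H$ using that $|L(v)|\geq 4$ together with the matching structure on the edges crossing the boundary of $H$. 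The hypothesis ``no $5$-cycles'' is used here to rule out the obstructing configurations (in particular, no $5$-faces exist, and a $3$-face cannot be linked to a $4$-face through a short path that would close up into a $5$-cycle).

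Second, I would run a discharging argument. Assign each $v\in V(G)$ the initial charge $\mu(v)=d(v)-4$ and each face $f\in F(G)$ the charge $\mu(f)=d(f)-4$. Euler's formula gives
\[
\sum_{v\in V(G)}\mu(v)+\sum_{f\in F(G)}\mu(f)=-8<0.
\]
Design local discharging rules that route charge from $\geq 6$-faces and high-degree vertices to $3$-faces, $4$-faces, and $3$-vertices. Then, using the reducibility lemmas, verify that every vertex and face ends with non-negative charge, contradicting the negative total.

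\textbf{Main obstacle.} The principal difficulty is the reducibility analysis. DP-coloring rules out the classical list-coloring tricks — Kempe chains, identifying colors at non-adjacent vertices, Alon--Tarsi-style counting — so every extension must be verified purely through the matchings on incident edges. Consequently, configurations around clusters of $3$-faces require a delicate finite case analysis, and the precise forbidden-cycle hypothesis of each bullet is calibrated exactly to keep the number of such cases manageable — which is why the seven items of Theorem~1 have each required a separate paper-length treatment, and why the sharpening to ``no $7$-cycles and no butterflies'' in the present paper demands a further fresh argument controlling triangle clusters under those two combined constraints.
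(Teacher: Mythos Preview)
The paper does not give its own proof of this theorem; it is stated purely as a summary of results from the cited references, so there is no ``paper's proof'' to compare against. That said, two concrete points in your outline are off.

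First, your minimum-degree claim is wrong for DP-$4$-coloring: a minimal counterexample has $\delta(G)\ge 4$, not $3$. If $d(v)\le 3$, delete $v$, DP-$4$-color $G-v$ by minimality, and extend to $v$ since $|L(v)|\ge 4$ and at most three colors are forbidden by the matchings on the incident edges. (The paper records exactly this as Lemma~\ref{minimum}.) Consequently the auxiliary claim ``no two $3$-vertices are adjacent'' is vacuous, and the discharging rules you sketch should not be routing charge to $3$-vertices at all.

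Second, and more seriously, the representative case you picked---planar graphs without $5$-cycles---is precisely one for which no discharging is needed. As the paper remarks just before stating the theorem, for each $k\in\{3,5,6\}$ a planar graph without $C_k$ is $3$-degenerate, and any $3$-degenerate graph is trivially DP-$4$-colorable by greedy extension. So your sentence ``the hypothesis `no $5$-cycles' is used here to rule out the obstructing configurations'' does not reflect how that item is actually proved. The discharging-plus-reducibility template you describe is the right picture for $k=4$ and for the adjacency items in the second bullet, but if you want a representative case to illustrate it you should choose one of those.
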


Note that it is conjectured independently in \cite{LSX01, WL02b} that every plane graph without adjacent triangles is 4-choosable and it remains open.  We would like to conjecture that every plane graph without adjacent $4$-cycles is $4$-choosable, or even DP-$4$-colorable.

A {\em cluster} in a plane graph $G$ is a subgraph of $G$ that consists of a minimal set of 3-faces such that no other 3-face is adjacent to 3-faces in the set. It is called a {\em $k$-cluster} if it contains $k$ $3$-faces.   Below is the set of possible clusters with distinct vertices in a plane graph without $7$-cycles (in \cite{F09}, all 23 clusters in such plane graphs are given).

\begin{figure}[H]
\includegraphics[scale=0.7]{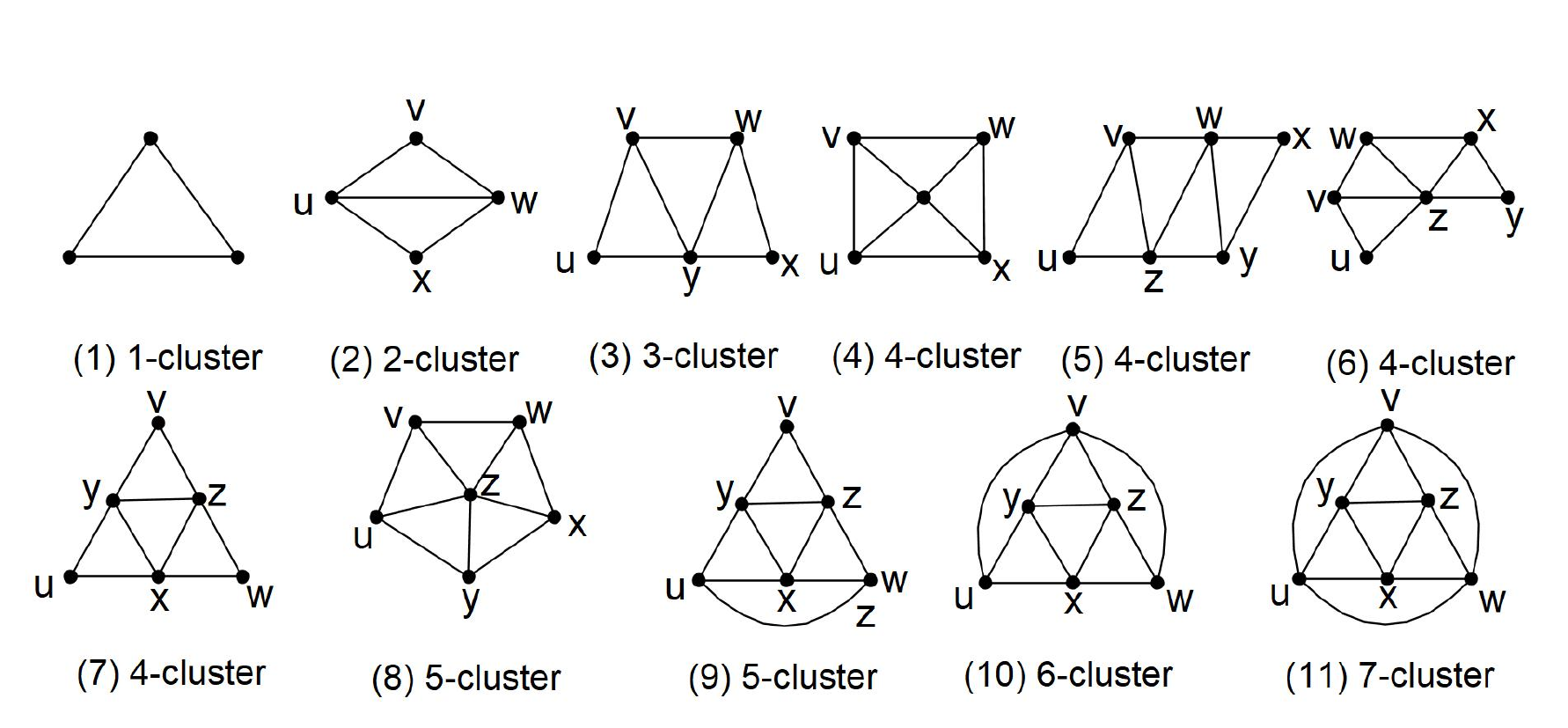}
\caption{All possible clusters with distinct vertices in a plane graph without $7$-cycles.}\label{cluster}
\end{figure}

It turns out that all known results on DP-$4$-coloring of planar graphs use certain kinds of condition to forbid one or more of the given clusters in Figure~\ref{cluster}, especially the clusters with more $3$-faces. What makes the proof of list-4-coloring of planar graphs without $7$-cycles difficult is that one has to take care of all the clusters in the figure.

In this article, we give a sufficient condition for a planar graph to be DP-$4$-colorable. This condition allows the existence of all clusters in Figure~\ref{cluster}.



\begin{theorem}\label{main1}
Every planar graph without 7-cycles and butterflies is DP-4-colorable, where a butterfly is a graph isomorphic to the configuration depicted in Figure~\ref{butterfly}.
\end{theorem}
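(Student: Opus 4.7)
The plan is to argue by minimum counterexample and use the discharging method. Assume for contradiction that there exists a planar graph without $7$-cycles and butterflies that is not DP-$4$-colorable, and let $G$ be such a graph with the fewest vertices. Thus $G$ is connected and every proper subgraph of $G$ admits an $\mathcal{M}_L$-coloring for every matching assignment $\mathcal{M}_L$ with $|L(v)|\ge 4$ for every vertex $v$.

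The first stage is a catalogue of reducible configurations that $G$ cannot contain. The routine starting point is $\delta(G)\ge 3$: a vertex $v$ of degree at most $2$ is deletable, since any DP-$4$-coloring of $G-v$ extends because at most two colors in $L_v$ are matched to the colors chosen at the neighbors of $v$. Beyond this, I would adapt the reducibility toolkit developed in \cite{KO18, LL19, LLNSY18+} to the present setting, ruling out low-degree vertices incident to several $3$-faces, short paths of $3$- and $4$-vertices on a common $3$-face, and face configurations in which too many $3$-faces cluster around a single low-degree vertex. The typical lemma has the form: ``if $G$ contains a prescribed subgraph $H$, then any DP-$4$-coloring of $G-V(H)$ can be extended to $G$''. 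This is proved by choosing a removal order on $V(H)$ and, at each step, bounding the number of colors forbidden at the current vertex by its already-colored neighbors via their matchings, showing that strictly fewer than $|L(v)|$ colors are forbidden. The butterfly-free hypothesis significantly thins the case analysis, since every cluster in Figure~\ref{cluster} in which two $3$-faces share only a vertex is already absent from $G$.

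The second stage is discharging. Set $\mu(v) = d(v) - 4$ for each vertex $v$ and $\mu(f) = d(f) - 4$ for each face $f$; by Euler's formula
\[
\sum_{v\in V(G)}\mu(v) + \sum_{f\in F(G)}\mu(f) = -8.
\]
Since $G$ has no $7$-cycles, adjacent small faces are severely restricted (for instance, a $3$-face and a $4$-face sharing an edge already force certain $7$-cycle-creating paths to be absent), and the butterfly-free condition further limits how $3$-faces may group. I would then design rules that move charge from $8^+$-faces and high-degree vertices to the nearby $3$-faces, $3$-vertices, and other elements of negative initial charge, tuned so that the reducible configurations excluded in the first stage are exactly those required to guarantee a non-negative final charge at every vertex and face. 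A final total charge of at least $0$ then contradicts the initial sum $-8$.

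The main obstacle I anticipate is the reducibility step. Unlike in list coloring, one cannot rename or identify colors across a matching $M_{uv}$, so Kempe-chain and vertex-identification tricks must be replaced by purely matching-theoretic counting, and even innocuous-looking extensions become delicate because the matchings can permute colors arbitrarily. Ensuring that every cluster from Figure~\ref{cluster} permitted by the butterfly-free hypothesis either contains a reducible configuration or is absorbed by a discharging rule, while simultaneously respecting the $7$-cycle-free hypothesis throughout the local case analysis, is where the bulk of the technical work will lie.
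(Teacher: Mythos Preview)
Your overall framework (minimum counterexample, reducible configurations, discharging with $\mu(x)=d(x)-4$) is the same as the paper's, but two concrete points would derail the proof as written.

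First, you misread the role of the butterfly hypothesis. You write that ``every cluster in Figure~\ref{cluster} in which two $3$-faces share only a vertex is already absent from $G$''. This is false: the paper emphasizes that \emph{all eleven} clusters in Figure~\ref{cluster} can occur in $G$, including the $6$- and $7$-clusters. The butterfly-free condition is not used to eliminate any cluster; it is used in the vertex discharging to guarantee that a single vertex cannot be $4$-type to two different clusters (this is the inequality $x_4+x_4'\le 1$ in the paper). Its effect is on how clusters can \emph{meet at a vertex}, not on which clusters can exist. If you build your reducibility catalogue on the assumption that several of the large clusters are already forbidden, the discharging step will not close. Relatedly, your minimum-degree claim should be $\delta(G)\ge 4$, not $3$: with lists of size $4$, a vertex of degree at most $3$ is already deletable.

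Second, you are missing the key strengthening step. The paper does not attack Theorem~\ref{main1} directly; it proves the stronger Theorem~\ref{main2} (every DP-$4$-coloring of a \emph{good} $3$-cycle extends to all of $G$). This buys two crucial things: the outer face $C$ carries extra charge $\mu(C)=d(C)+4$, and the minimal counterexample has no separating good $3$-cycles (Lemma~\ref{separating}). The latter is what cuts the cluster list from the $23$ configurations of \cite{F09} down to the $11$ in Figure~\ref{cluster}. Without this strengthening you would have to handle all $23$ clusters, and the delicate matching-assignment lemmas for $6$- and $7$-clusters (Lemmas~\ref{555} and~\ref{556}), which already require careful case analysis on the matchings $M_{uv}$, would multiply accordingly.
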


\begin{figure}[H]
\includegraphics[scale=0.4]{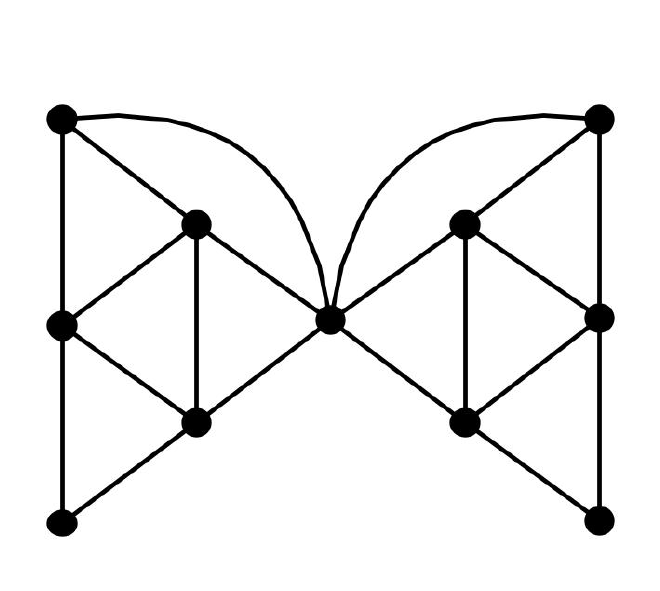}
\caption{A butterfly}
\label{butterfly}
\end{figure}

Without much effort, our proof can be modified to provide more sufficient conditions for a planar graph to be DP-4-colorable. Here is a potential list:  planar graphs without $k$-cycles adjacent to $k'$-cycles, where $(k,k')\in \{(5,5),(5,6),(6,6)\}$.

Let $\mathcal{G}$ be the set of planar graphs without 7-cycles and butterflies.  A $3$-cycle $C$ in a plane graph is {\em bad} if $C$ and interior of $C$ form a $7$-cluster.  So in Figure~\ref{cluster} (11), $uvw$ is a bad $3$-cycle.  A $3$-cycle is called {\em good} if it is not bad.  We actually prove the following stronger result.   


\begin{theorem}\label{main2}
Any DP-4-coloring of a good 3-cycle in plane graph $G\in \mathcal{G}$ can be extended to a DP-4-coloring of $G$.
\end{theorem}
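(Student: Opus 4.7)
The plan is a minimal-counterexample argument combined with a discharging analysis. Suppose $(G, \mathcal{M}_L, \phi)$ is a counterexample to Theorem~\ref{main2} minimizing $|V(G)| + |E(G)|$, where $G \in \mathcal{G}$, $\mathcal{M}_L$ is a matching assignment with $|L_u| = 4$ for every $u$, and $\phi$ is a DP-4-coloring of a good 3-cycle $C_0 = u_0 v_0 w_0$ that does not extend to $G$. By minimality, for every proper subgraph $H$ with $C_0 \subseteq H \subsetneq G$, the restricted matching assignment admits an $\mathcal{M}_L$-coloring of $H$ agreeing with $\phi$ on $C_0$.

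First I would build a library of reducible configurations. Standard reductions should rule out internal vertices of degree less than $4$, certain low-degree configurations in the neighborhood of $C_0$, $4$-vertices incident with too many $3$-faces, and specific $(4,4)$-edges sitting inside a triangle cluster. Each reduction deletes a carefully chosen vertex (or contracts an edge), uses minimality to extend $\phi$, and recovers a color for the removed vertex through the matchings $M_{uv}$; DP-coloring demands more care than list coloring here because matchings permute colors, so one frequently argues using Kempe-chain-style swaps in the cover graph $G_L$. The absence of butterflies enters the reducibility analysis by ruling out the single worst local obstruction that would otherwise make an internal $4$-vertex irreducible.

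Then a discharging phase closes the argument. Assign initial charge $\mu(v) = d(v) - 4$ to each vertex and $\mu(f) = d(f) - 4$ to each face; Euler's formula gives $\sum \mu = -8$. Since all $3$-faces of $G$ lie inside clusters drawn from Figure~\ref{cluster} and $G$ is butterfly-free, each vertex is incident with a bounded number of $3$-faces, and every $\ge 5$-face or $\ge 5$-vertex can spare charge. The rules would transfer charge from $\ge 5$-vertices and $\ge 4$-faces to the incident $3$-faces, and in restricted cases from $\ge 5$-faces to their $4$-vertices lying on multiple $3$-faces. The butterfly-free hypothesis is what limits how tightly clusters can stack, providing the slack needed to tune the rules so that every element finishes with non-negative charge, contradicting the negative total.

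The main obstacle will be the interaction between the larger clusters permitted by Figure~\ref{cluster} (the $5$-, $6$-, and $7$-clusters, which are exactly the configurations most list-coloring proofs were designed to forbid) and the reducibility analysis. In such a cluster a single $4$-vertex can lie on many $3$-faces at once, and its matchings can conspire to forbid all four colors unless one exploits the butterfly-free condition to produce a compensating swap elsewhere in the cluster; the distinction between good and bad $3$-cycles is also crucial precisely because a bad $3$-cycle sits inside a $7$-cluster, where pre-coloring would destroy the slack we need. Carrying out the case analysis over all cluster shapes in Figure~\ref{cluster} while keeping the discharging rules tight enough to reach a contradiction, without over-spending at any one cluster, is where the technical heart of the proof lies.
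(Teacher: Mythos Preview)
Your overall architecture---minimal counterexample, a library of reducible configurations, then discharging with $d(\cdot)-4$ charges---is exactly the paper's route, so the skeleton is right. Two substantive points deserve correction, however.

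First, you locate the butterfly-free hypothesis in the reducibility phase (``ruling out the single worst local obstruction that would otherwise make an internal $4$-vertex irreducible''). In the paper it is used only in discharging: a butterfly is precisely what would let a single $6^+$-vertex be $4$-type to two distinct $6^+$-clusters, and forbidding it gives the inequality $x_4+x_4'\le 1$ needed to show $\mu^*(v)\ge 0$ for $6\le d(v)\le 8$. None of the reducibility lemmas invokes it.

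Second, the reducibility lemmas you anticipate (``$4$-vertices incident with too many $3$-faces'', ``Kempe-chain-style swaps'') do not match what is actually needed. The decisive structural lemmas (Lemmas~\ref{555} and~\ref{556}) concern the degree pattern of the three $5^+$-vertices $u,v,w$ on a special $6$- or $7$-cluster: roughly, not too many of them can be $5$-vertices (or special $6$-vertices) simultaneously. Their proofs do not use Kempe chains; instead one straightens a spanning tree of the cluster (Lemma~\ref{straight}) and then performs a finite case analysis on how the remaining matching edges can sit, exhibiting in each case a coloring order that succeeds. This matching-by-matching case analysis is the technical heart, and it is genuinely different from list-coloring arguments---the paper's final section shows examples where the list-coloring reductions fail for DP-coloring. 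Your plan would benefit from replacing the vague ``Kempe-style'' language with this straighten-then-branch mechanism, and from moving the butterfly hypothesis to its actual point of use.
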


By \cite{KO18}, every planar graph without triangles is DP-4-colorable. So we may assume that $G\in\mathcal{G}$ contains a triangle. In particular, we can always find a good triangle in $G$.  By Theorem~\ref{main2}, we can get a DP-4-coloring of $G$ by extending a DP-4-coloring of a triangle in $G$.

We use a discharging argument in our proof.  This involves the proof of some reducible configurations.  The proof of reducibility of some $6$- and $7$-clusters in Lemma~\ref{555} and \ref{556} involves careful consideration of matching assignments, thus is essentially different from that of the structures in list-coloring.   Also, some $6$-clusters are reducible in list-$4$-coloring but not reducible in DP-$4$-coloring, which makes the addition of forbidding butterflies  necessary for our proof, see the Final Remarks section.  On the other hand, by strengthening Theorem~\ref{main1} to Theorem~\ref{main2}, instead of 23 clusters in \cite{F09}, we only need to discuss 11 clusters (see Figure~\ref{cluster}), so our proof can be modified to give a simplified proof of 4-choosability of planar graphs without $7$-cycles.

The paper is organized as follows. In Section 2, we show the reducible structures useful in our proof. In Section 3, we show the discharging process to complete the proof. In Section 4, we give some examples to show the necessary of adding the butterflies to be forbidden structure.

\section{Reducible configurations}

The following are some notions used in the paper. A $k$-vertex ($k^+$-vertex, $k^-$-vertex, respectively) is a vertex of degree $k$ (at least $k$, at most $k$, respectively). The same notation will be applied to faces and cycles.  An $(\ell_1, \ell_2, \ldots, \ell_k)$-face is a $k$-face $[v_1v_2\ldots v_k]$ with $d(v_i)=\ell_i$ for $1 \leq i \leq k$. Let $C$ be a cycle of a plane graph $G$. We use $int(C)$ (resp. $ext(C)$) to denote the sets of vertices located inside (resp. outside) the cycle $C$. The cycle $C$ is called {\em separating} if both $int(C)$ and $ext(C)$ are nonempty. The next lemma follows immediately from (\cite{DP18}, Lemma 7).

\begin{lemma}\label{straight}
Let $G$ be a graph with a matching assignment $\mathcal{M}_L$. Let $T$ be a subgraph of $G$ which is a tree. Then we may rename $L(u)$ for $u\in T$ to obtain a  matching assignment $\mathcal{M'}_L$ for $G$ such that all edges of $T$ are straight in $\mathcal{M'}_L$.
\end{lemma}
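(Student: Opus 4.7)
The plan is to prove the lemma by induction on $|V(T)|$, or equivalently by processing the tree in BFS order from a fixed root. The renaming will be defined one vertex at a time, using only the matching on the unique parent edge, so that consistency is automatic.

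First I would pick an arbitrary root $r$ of $T$ and leave $L(r)$ unchanged. Then I would order the remaining vertices of $T$ as $v_1, v_2, \ldots, v_{|T|-1}$ in BFS (or any parent-before-child) order, so that when $v_i$ is processed its unique parent $u_i$ in $T$ has already been processed. Since the current matching $M_{u_iv_i}$ is a perfect matching between $L(u_i)$ and $L(v_i)$, for each color $c\in L(u_i)$ there is a unique color $\sigma_i(c)\in L(v_i)$ with $(u_i,c)(v_i,\sigma_i(c))\in M_{u_iv_i}$. I would then rename the element currently labelled $(v_i,\sigma_i(c))$ to $(v_i,c)$, i.e.\ apply the bijection $\sigma_i^{-1}$ to the labels in $L(v_i)$. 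After this renaming the edge $u_iv_i$ becomes straight.

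Next I would observe that this relabeling of $L(v_i)$ also induces a relabeling of the endpoints of every matching $M_{v_iw}$ for $w$ a neighbor of $v_i$ in $G$, but each such $M_{v_iw}$ remains a perfect matching between $L(v_i)$ (with the new labels) and $L(w)$, so the resulting $\mathcal{M}'_L$ is still a valid matching assignment on $G$. Crucially, because $T$ is a tree, each vertex $v_i$ has exactly one parent edge in $T$, hence its label set is renamed at most once; and the relabeling of $L(v_i)$ does not undo the straightness already achieved on the edge $u_iv_i$, since straightness of $u_iv_i$ is determined by the pair $(L(u_i),L(v_i))$ which is fixed once both are renamed. After all vertices are processed, every edge of $T$ is straight in $\mathcal{M}'_L$.

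The only thing to be careful about is consistency: if $T$ contained a cycle, a single vertex could acquire two incompatible renaming requirements from two distinct parent edges. The acyclicity of $T$, together with the BFS order guaranteeing each non-root vertex is touched exactly once, is what makes the procedure well-defined; this is the sole non-trivial point, and it is immediate from the tree hypothesis. The statement then follows, matching the cited formulation of (\cite{DP18}, Lemma 7).
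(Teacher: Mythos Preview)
Your argument is correct and is the standard inductive proof: root the tree, process vertices in parent-before-child order, and at each non-root vertex apply the bijection induced by the matching on its parent edge; acyclicity guarantees each list is relabeled exactly once, so no conflict arises. The paper itself does not supply a proof of this lemma---it simply cites \cite{DP18}, Lemma~7---so there is nothing to compare against beyond noting that your write-up is precisely the argument that reference would give.
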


Let $(G,C)$ be a minimal counterexample to Theorem~\ref{main2}, where $C$ is a good $3$-cycle in $G$ that has a DP-4-coloring $\phi_C$. If $C$ is a separating cycle, then any precoloring of $C$ can be extend to $int(C)$ and $ext(C)$, respectively. Then we get a DP-4-coloring of $G$, a contradiction. So we let $G$ be a plane graph so that $C$ is the boundary of the outer face in the rest of this paper. We still denote the outer face by $C$. Call a vertex $v$ in $G$ {\em internal} if $v\notin V(C)$, and a subgraph $H$ in $G$ {\em internal} if $V(H)\cap V(C)=\emptyset$. Assume  that $I'$ is an independent set in $G_L$ with $|I'|<|V(G)|$ that extends $\phi_C$.  For each $v\in V(G)$ with $L_v\cap I'=\emptyset$,   
define $L^*_v=L_v-\{(v,k): (v,k)(u,k)\in E(G_L), u\in N_G(v) \text{ and } (u,k)\in I'\}.$ Intuitively,  $L^*_v$ contains the available colors for $v$ after a partial coloring $I'$.

\begin{lemma}\label{minimum}
Every internal vertex in $G$ has degree at least 4.
\end{lemma}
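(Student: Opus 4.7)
The plan is a straightforward minimality argument. Suppose for contradiction that $G$ contains an internal vertex $v$ with $d_G(v)\leq 3$, and set $G':=G-v$. I will show that $(G',C)$ is a strictly smaller instance satisfying the hypotheses of Theorem~\ref{main2}, apply minimality to extend $\phi_C$ to all of $G'$, and then color $v$ by pigeonhole.

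First I would verify that $(G',C)$ is a valid smaller instance. Since $v\notin V(C)$, the triangle $C$ survives untouched in $G'$, and deleting a vertex cannot create new cycles, so $G'\in\mathcal{G}$. Moreover, any cluster in $G'$ is contained in the corresponding cluster of $G$, so if $C$ did not bound a 7-cluster in $G$, it still does not in $G'$; hence $C$ remains a good 3-cycle of $G'$. Equipped with the matching assignment $\mathcal{M}_L$ restricted to $G'$, the pair $(G',C)$ is a genuinely smaller counterexample candidate, so minimality applies.

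By the minimality of $(G,C)$, the precoloring $\phi_C$ extends to a DP-4-coloring of $G'$; that is, there is an independent set $I\subseteq V(G'_L)$ with $|I|=|V(G')|$ and $I\supseteq \phi_C$. To extend to $v$, observe that each neighbor $u\in N_G(v)$ contributes exactly one colored vertex $(u,c_u)\in I$, and via the perfect matching $M_{uv}$ this eliminates at most one element from $L_v$. Hence the set $L^*_v$ of colors still available at $v$ satisfies
\[
|L^*_v|\ \geq\ |L_v|-d_G(v)\ \geq\ 4-3\ =\ 1,
\]
so we may pick $(v,c)\in L^*_v$ and adjoin it to $I$, producing an independent set of size $|V(G)|$ in $G_L$, i.e.\ a DP-4-coloring of $G$ extending $\phi_C$, contradicting the choice of $(G,C)$.

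There is no real obstacle here: the only point requiring a moment of care is confirming that goodness of $C$ is preserved under deleting an internal vertex, which is immediate from the definition; the rest is the standard DP-coloring extension pigeonhole, which works precisely because each $M_{uv}$ is a perfect matching between lists of size four and $d_G(v)<4$.
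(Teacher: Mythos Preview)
Your proof is correct and follows essentially the same approach as the paper: assume an internal $3^-$-vertex $v$, delete it, apply minimality to extend $\phi_C$ to $G-v$, and then color $v$ using $|L_v^*|\ge 4-3=1$. The only difference is that you spell out more carefully why $(G-v,C)$ is still a valid instance (in particular why $C$ stays good), which the paper leaves implicit.
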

\begin{proof}
Suppose otherwise that there exists an internal $3^-$-vertex $v$ in $G$. By the minimality of $(G,C)$, $G-v$ has a DP-4-coloring that extends $\phi_C$. Thus there is an independent set $I'$ in $G_L$ with $|I'|=|V(G)|-1$. Since $|L_v|\ge4$ and $v$ is a $3^-$-vertex, we have $|L^*_v|\ge1$. So we can pick a vertex $(v,c)\in L^*_v$ such that $I'\cup\{(v,c)\}$ is an independent set of $G_L$ with $|V(G)|$ vertices, a contradiction.
\end{proof}

\begin{lemma}\label{separating}
G contains no separating good 3-cycles.
\end{lemma}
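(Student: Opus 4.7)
My plan is to derive a contradiction by splitting $G$ along the separating cycle and extending the coloring in two steps, using the minimality of $(G,C)$. Suppose $C'$ is a separating good $3$-cycle in $G$. Then both $int(C')$ and $ext(C')$ are nonempty, and since $C$ is the outer face we have $V(C)\subseteq V(C')\cup ext(C')$. Let $G_1:=G-int(C')$, drawn with $C$ as its outer face and $C'$ bounding an inner face, and let $G_2:=G[V(C')\cup int(C')]$, drawn with $C'$ as its outer face. Both are subgraphs of $G$, so both belong to $\mathcal{G}$, and each has strictly fewer vertices than $G$.

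Before invoking minimality I will verify that $(G_1,C)$ and $(G_2,C')$ are valid smaller instances of Theorem~\ref{main2}. Goodness of $C'$ as a $3$-cycle in $G_2$ is immediate: $int_{G_2}(C')=int_G(C')$, so if $C'$ together with its interior in $G_2$ formed a $7$-cluster, the same configuration would already witness $C'$ being bad in $G$, contrary to assumption. Goodness of $C$ in $G_1$ is the delicate point, because removing $int(C')$ turns the region bounded by $C'$ into a new inner face of $G_1$ which could a priori enlarge the cluster at $C$. I would handle this by noting that every $3$-face of $G_1$ in the cluster at $C$ other than this new face is inherited from $G$ with identical adjacencies, and that each edge of $C'$ bounds an interior face of $G$ that lifts the new face back to one or more $3$-faces of $G$; since clusters in $\mathcal{G}$ have at most seven $3$-faces and are of the types listed in Figure~\ref{cluster}, a $7$-cluster at $C$ in $G_1$ would force a $7$-cluster at $C$ in $G$, contradicting goodness of $C$ in $G$.

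With these checks in place I would apply the minimality of $(G,C)$ twice. First, since $(G_1,C)$ is a smaller valid instance, $\phi_C$ extends to a DP-$4$-coloring $\phi_1$ of $G_1$; its restriction $\phi_1|_{V(C')}$ is a DP-$4$-coloring of $C'$ with respect to the matching assignment inherited from $\mathcal{M}_L$. Second, since $(G_2,C')$ is a smaller valid instance, $\phi_1|_{V(C')}$ extends to a DP-$4$-coloring $\phi_2$ of $G_2$. Because $\phi_1$ and $\phi_2$ agree on $V(C')$, their union is a DP-$4$-coloring of $G$ extending $\phi_C$, contradicting the assumption that $(G,C)$ is a counterexample to Theorem~\ref{main2}.

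The main technical hurdle I expect is the goodness check for $C$ in $G_1$. The two-step coloring itself is a standard separating-cycle argument, but because the inductive hypothesis requires a \emph{good} $3$-cycle on the outer face rather than merely any $3$-cycle, this bookkeeping cannot be skipped, and the creation of a new face bounded by $C'$ in $G_1$ means one must argue carefully that the cluster at $C$ cannot silently grow into a $7$-cluster when $int(C')$ is removed.
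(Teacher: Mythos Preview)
Your two-step extension along $C'$ is precisely the paper's argument; the paper states it in three sentences and does not pause to check that $(G_1,C)$ and $(G_2,C')$ are valid instances of Theorem~\ref{main2}. You are right to flag these checks, and your verification that $C'$ is good in $G_2$ (via $int_{G_2}(C')=int_G(C')$) is correct.

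Your verification that $C$ remains good in $G_1$, however, does not work as written. The assertion that a $7$-cluster at $C$ in $G_1$ would force a $7$-cluster at $C$ in $G$ is false in general: if $G_1$ were the octahedron with $C=uvw$ as its outer $3$-cycle and $C'$ one of its seven inner $3$-faces, then in $G$ the triangle $C'$ is no longer a face, the interior of $C$ in $G$ has more than three vertices, and $C$ together with its interior is \emph{not} a $7$-cluster in $G$---so there is nothing to contradict. Nor is it automatic that each edge of $C'$ bounds a $3$-face of $G$ inside $C'$, so your ``lifting'' step is unsupported. The paper simply omits this point; to close it rigorously one would need a separate argument (for instance, combining Lemma~\ref{minimum} with planarity and the absence of $7$-cycles to exclude the possibility that $G_1$ is the octahedron), which neither you nor the paper supplies.
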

\begin{proof}
Let $C'$ be a separating good $3$-cycle in $G$. By the minimality of $(G,C)$, $\phi_C$ can be extended to $G-int(C')$. After that, $C'$ is precolored, then again the coloring of $C'$ can be extended to $int(C')$. Thus, we get a DP-4-coloring of $G$, a contradiction.
\end{proof}

\begin{lemma}\label{diamond}
Two internal $(4,4,4)$-faces cannot share exactly one common edge unless they form a $K_4$.
\end{lemma}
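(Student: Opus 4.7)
The plan is to suppose for contradiction that two internal $(4,4,4)$-faces $[u_1u_2u_3]$ and $[u_1u_2u_4]$ share only the edge $u_1u_2$ and do not form a $K_4$, so $u_3\neq u_4$ and $u_3u_4\notin E(G)$. All four vertices have degree $4$ and lie off $C$, so $u_1$ and $u_2$ each have exactly one further neighbor and $u_3,u_4$ each have exactly two. Setting $G'=G-\{u_1,u_2,u_3,u_4\}$ yields a smaller plane graph in $\mathcal{G}$ still containing $C$ as a good outer $3$-cycle, so by the minimality of $(G,C)$ we obtain an independent set $I'\subseteq G_L$ of size $|V(G')|$ extending $\phi_C$. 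For $i\in\{1,2,3,4\}$ let $L^*_{u_i}$ be the colors of $u_i$ not matched to a color in $I'$; counting colored neighbors gives $|L^*_{u_1}|,|L^*_{u_2}|\geq 3$ and $|L^*_{u_3}|,|L^*_{u_4}|\geq 2$. The task is to pick $c_i\in L^*_{u_i}$ such that no edge $u_iu_j$ of the diamond $K_4-u_3u_4$ has $(u_i,c_i)(u_j,c_j)\in M_{u_iu_j}$.

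The main step is to choose $(c_3,c_4)\in L^*_{u_3}\times L^*_{u_4}$ with
\[ \bigl|L^*_{u_1}\setminus\{M_{u_1u_3}(c_3),\,M_{u_1u_4}(c_4)\}\bigr|\;\geq\;2. \]
If $|L^*_{u_1}|\geq 4$ any choice works. Otherwise $|L^*_{u_1}|=3$: if some $c\in L^*_{u_3}$ has $M_{u_1u_3}(c)\notin L^*_{u_1}$, taking $c_3=c$ and any $c_4\in L^*_{u_4}$ removes at most one element of $L^*_{u_1}$, and symmetrically for $L^*_{u_4}$. Otherwise $M_{u_1u_3}(L^*_{u_3})$ and $M_{u_1u_4}(L^*_{u_4})$ are subsets of the $3$-set $L^*_{u_1}$, each of size at least $2$, and hence intersect; picking $c_3,c_4$ with $M_{u_1u_3}(c_3)=M_{u_1u_4}(c_4)$ again removes only one color of $L^*_{u_1}$. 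This case analysis is the main obstacle, forced because matching images can fall either inside or outside $L^*_{u_1}$.

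Once such $(c_3,c_4)$ is fixed, choose $c_2\in L^*_{u_2}\setminus\{M_{u_2u_3}(c_3),M_{u_2u_4}(c_4)\}$, which has size at least $1$, and finally choose $c_1$ from the $\geq 2$ remaining colors of $L^*_{u_1}$ after removing $M_{u_1u_3}(c_3),M_{u_1u_4}(c_4)$ and then $M_{u_1u_2}^{-1}(c_2)$. Adjoining $\{(u_i,c_i):i=1,2,3,4\}$ to $I'$ produces an $\mathcal{M}_L$-coloring of all of $G$ extending $\phi_C$, contradicting the assumption that $(G,C)$ is a counterexample.
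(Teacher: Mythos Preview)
Your proof is correct and follows the same overall scheme as the paper: delete the four vertices of the diamond, invoke minimality to color $G-\{u_1,u_2,u_3,u_4\}$, observe the lower bounds $|L^*_{u_1}|,|L^*_{u_2}|\ge 3$ and $|L^*_{u_3}|,|L^*_{u_4}|\ge 2$, and then extend.

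The only difference is in the extension step. You color the two ``outer'' vertices $u_3,u_4$ first, running a small pigeonhole case analysis on where $M_{u_1u_3}(L^*_{u_3})$ and $M_{u_1u_4}(L^*_{u_4})$ fall relative to $L^*_{u_1}$ in order to guarantee that $u_1$ retains two colors, and then finish greedily with $u_2,u_1$. The paper instead colors a ``central'' vertex first: since $|L^*_v|\ge 3>2\ge |L^*_x|$, one can pick a color for $v$ whose match in $L_x$ lies outside $L^*_x$, so $x$ still has two available colors; then $y,u,x$ are colored greedily in that order. The paper's route avoids the case split entirely by exploiting the inequality $3>2$ once, whereas your route needs the intersection argument on two $2$-subsets of a $3$-set. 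Both are valid; the paper's version is just a line shorter.
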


\begin{proof}
Suppose for a contradiction that $T_1=uvx$ and $T_2=uvy$ are two internal $(4,4,4)$-faces so that $xy\not\in E(G)$. Let $S=\{u,v,x,y\}$. By the minimality of $(G,C)$, the graph $G-S$ has a DP-4-coloring that extends $\phi_C$. Thus there is an independent set $I'$ in $G_L$ with $|I'|=|V(G)|-4$.  Then $|L^*_u|\ge3,\  |L^*_v|\ge3,\  |L^*_x|\ge2,\  |L^*_y|\ge2.$ So we can select a vertex $(v,c)$ in $L^*_v$ for $v$ such that $L^*_x\setminus \{(x,c): (v,c)(x,c)\in E(G_L)\}$ has at least two available colors. Color $y,u,x$ in order, we can find an independent set $I^*$ with $|I^*|=4$. So $I'\cup I^*$ is an independent set of $G_L$ with $|I'\cup I^*|=|V(G)|$, a contradiction.
\end{proof}

We call a cluster {\em special} if it is one of Figure~\ref{cluster} (7)(9)(10)(11) with three internal $4$-vertices $x,y,z$.  For an internal $5^+$-vertex $v$ in a cluster $H$, we shall call $v$ {\em $i$-type to $H$} if $v$ is incident with  exactly $i$ edges in $H$; furthermore, we call $v$ {\em good} when $H$ is special, and call $v$ {\em bad} otherwise.  For example, vertex $u$ in Figure \ref{cluster} (9) is good when $H$ is special and 3-type to $H$, and vertex $v$ in Figure \ref{cluster} (8) is bad and 3-type to $H$.

\begin{lemma}\label{special5}
Every internal $5$-vertex in $G$ cannot be on two special clusters.
\end{lemma}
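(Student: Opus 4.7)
My plan is to argue by contradiction. Suppose an internal $5$-vertex $v$ lies on two distinct special clusters $H_1$ and $H_2$. The first observation is that $H_1$ and $H_2$ share no common edge: if an edge $e$ were in both, it would be incident to a $3$-face of $H_1$ and a $3$-face of $H_2$, making those two $3$-faces edge-adjacent and hence forcing them into the same cluster by the very definition of a cluster, contradicting $H_1\neq H_2$. Consequently, the edges of $G$ incident to $v$ that lie in $H_1$ and those that lie in $H_2$ are disjoint subsets of the five edges at $v$.

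The second step is a direct inspection of Figure~\ref{cluster}, types $(7)$, $(9)$, $(10)$, $(11)$. For each of these special clusters I would verify that every vertex other than the three designated internal $4$-vertices $x$, $y$, $z$ is incident to at least three edges of the cluster --- that is, every non-$xyz$ vertex of a special cluster is at least $3$-type. Since $\deg_G(v)=5$, the vertex $v$ cannot play the role of any of $x$, $y$, $z$ (all of which have degree $4$). Thus in each of $H_1$ and $H_2$ the vertex $v$ is at least $3$-type.

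Combining the two steps, $v$ is incident to at least $3+3=6$ distinct edges of $G$, contradicting $d_G(v)=5$, and the lemma follows. The main obstacle is the case-by-case check in Step~2: for each of the four special cluster types one must carefully trace Figure~\ref{cluster} and count the cluster-edges at every boundary vertex. This is routine but essential, because if some special cluster admitted a non-$xyz$ vertex of only $2$-type, the clean edge-counting above would fail and one would have to invoke a finer structural argument (for instance, showing that the union of $H_1$ and $H_2$ around a $2$-type vertex $v$ would create a forbidden $7$-cycle or a butterfly in $G$, or violate Lemma~\ref{separating} via a separating good $3$-cycle).
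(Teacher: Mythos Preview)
Your Step~2 claim is false, and the error is fatal for the edge-counting argument. In the special $4$-cluster of Figure~\ref{cluster}(7) the three non-$\{x,y,z\}$ vertices $u,v,w$ are each incident to exactly \emph{two} edges of the cluster (each sits on a single outer triangle $uxy$, $vyz$, or $wxz$); this is precisely why in Case~4.2 of the discharging they contribute only $\tfrac12$ each via (R2), the rate for $2$-type vertices. Likewise, in the special $5$-cluster of Figure~\ref{cluster}(9) the vertex labeled $v$ is $2$-type (cf.\ Case~5.2, where $H_5$ receives only $\tfrac12$ from $v$). Hence a $5$-vertex can sit as a $2$-type vertex on one special cluster and as a $2$- or $3$-type vertex on another, using only $4$ or $5$ of its incident edges, and no degree contradiction arises.

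Your suggested fallback---hunting for a forbidden $7$-cycle, a butterfly, or a separating good triangle in the union $H_1\cup H_2$---does not repair the gap, and indeed the paper does not pursue any structural route. The paper's proof is a \emph{reducibility} argument: one observes that a $5$-vertex $v$ on two special clusters has two disjoint pairs of neighbors $v_1v_2$ and $v_3v_4$, each pair lying on an internal $(4,4,4)$-face (with third vertex $v_{12}$, resp.\ $v_{34}$); deleting the seven vertices $\{v,v_1,v_2,v_3,v_4,v_{12},v_{34}\}$, one colors the rest by minimality and then extends greedily, exploiting that $|L^*_{v_i}|\ge 3$ while $|L^*_{v_{12}}|,|L^*_{v_{34}}|\ge 2$ to save a color at each $v_{12},v_{34}$. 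So the configuration is excluded not because it is structurally impossible in $\mathcal G$, but because it cannot survive in a minimal counterexample.
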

\begin{proof}
Let $v$ be an internal $5$-vertex with $N(v)=\{v_i: 1\le i\le5\}$. Suppose otherwise that $v$ is on two special clusters $H_1,H_2$. By symmetry assume that $v_1v_2v_{12}$ in $H_1$ and $v_3v_4v_{34}$ in $H_2$ are internal $(4,4,4)$-faces.  By the minimality of $(G,C)$, the graph $G-\{v_1,v_2,v_3,v_4,v_{12},v_{34},v\}$ has a DP-4-coloring extends $\phi_C$. Thus there is an independent set $I'$ in $G_L$ with $|I'|=|V(G)|-7$. Note that $|L^*_{v_1}|\ge3, |L^*_{v_2}|\ge3, |L^*_{v_3}|\ge3, |L^*_{v_4}|\ge3, |L^*_{v_{12}}|\ge2, |L^*_{v_{34}}|\ge2, |L^*_{v}|\ge 3$. So we can select a vertex $(v_1,c)\in L^*_{v_1}$ such that $(v_1, c)$ has no neighbors in $L^*_{v_{12}}$, and a vertex $(v_3, c')\in L^*_{v_3}$ such that $(v_3, c')$ has no neighbors in $L^*_{v_{34}}$. Color $v, v_4,v_{34}, v_2,v_{12}$ in order, we can find an independent set $I^*$ with $|I^*|=7$. So $I'\cup I^*$ is an independent set of $G_L$ with $|I'\cup I^*|=|V(G)|$, a contradiction.
\end{proof}

We call an internal $6$-vertex $v$ {\em special} if $v$ is good $4$-type to an internal $k_1$-cluster $K_1$ and good $2$-type to a $k_2$-cluster $K_2$ with $6\le k_1\le 7$ and $4\le k_2\le 5$, see Figure~\ref{special-6} for an illustration.

\begin{figure}[H]
\includegraphics[scale=0.35]{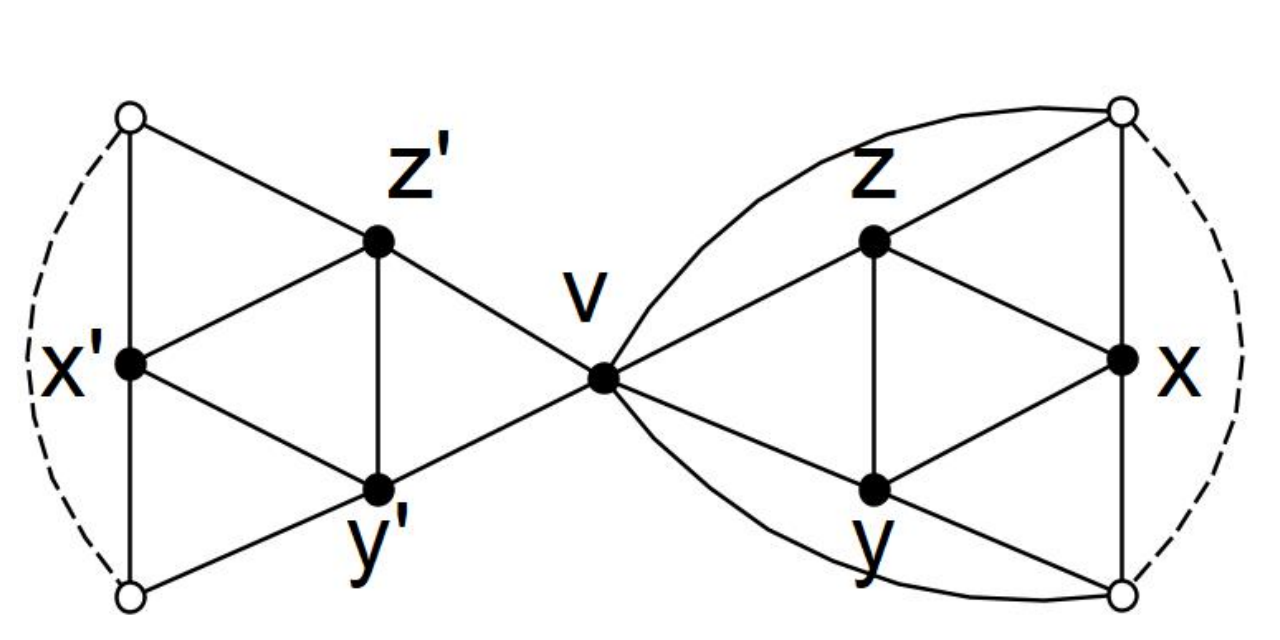}
\caption{A special $6$-vertex $v$ in $G$.}\label{special-6}
\end{figure}

\begin{lemma}\label{special6}
Let $v$ be a special $6$-vertex on a $6^+$-cluster $K_1$ and a $5^-$-cluster $K_2$.  Let $x'y'z'$ be a $(4,4,4)$-face in $K_2$ such that $y',z'\in N(v)$.   Let $\phi_{G'}$ be a DP-coloring of $G'=G-K_1-\{x',y',z'\}$ that extends $\phi_C$. Then among the four colors in $L_v$, one can precolor $v$ with all but at most one color so that $\{x',y',z'\}$ can be colored.
\end{lemma}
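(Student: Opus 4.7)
The plan is to compute the available color lists at $v, x', y', z'$ after $\phi_{G'}$, and to show via a cross-intersection counting argument that at most one color of $v$ blocks the extension to $\{x', y', z'\}$. The main obstacle is the tight equality analysis that rules out two simultaneously blocking colors, which ultimately reduces to a contradiction with the injectivity of the matching $M_{x'y'}$.

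First I compute list sizes. The uncolored vertices are $V(K_1)\cup\{x',y',z'\}$. Since $v$ is $4$-type to $K_1$ and $2$-type to $K_2$ (with the $K_2$-edges being $vy', vz'$), all six neighbors of $v$ lie among the uncolored, giving $|L^*_v|=4$. The cluster structure and the butterfly-free hypothesis force $V(K_1)\cap V(K_2)=\{v\}$, so the fourth neighbor of each of $y', z'$ lies in $G'$ and is already colored, and $|L^*_{y'}|=|L^*_{z'}|=3$; likewise the two neighbors of $x'$ outside $\{y',z'\}$ are colored, so $|L^*_{x'}|\ge 2$. Write $A=L^*_{x'}$, $B=L^*_{y'}$, $C=L^*_{z'}$.

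Next, let $T$ denote the number of independent triples $(a,b,c)\in A\times B\times C$ in the DP-structure on triangle $x'y'z'$. For each fixed $a\in A$, removing $a$'s partners under $M_{x'y'}$ and $M_{x'z'}$ leaves sub-lists in $B,C$ of size $\ge 2$; then $M_{y'z'}$ covers at most $2$ of the resulting $\ge 4$ pairs, leaving at least $2$ independent, so $T\ge 2|A|$. For each $c_v\in L_v$, let $\beta(c_v)\in L_{y'}$ and $\gamma(c_v)\in L_{z'}$ be its partners under $M_{vy'}, M_{vz'}$; these are bijections. Precoloring $v:=c_v$ blocks extension iff every triple in $T$ satisfies $b=\beta(c_v)$ or $c=\gamma(c_v)$; equivalently, its $(b,c)$-projection lies in the ``cross'' $X_{c_v} := (\{\beta(c_v)\}\times L_{z'}) \cup (L_{y'}\times\{\gamma(c_v)\})$.

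Suppose two colors $v_1, v_2\in L_v$ both block, and set $\beta_i=\beta(v_i), \gamma_i=\gamma(v_i)$. Since the matchings are bijections and $v_1\ne v_2$, we have $\beta_1\ne\beta_2$ and $\gamma_1\ne\gamma_2$, so $X_{v_1}\cap X_{v_2} = \{(\beta_1,\gamma_2),(\beta_2,\gamma_1)\}$. Its intersection with $B\times C$ has size $2$ precisely when $\beta_1,\beta_2\in B$ and $\gamma_1,\gamma_2\in C$, and size $\le 1$ otherwise. The latter case forces $T\le|A|$, contradicting $T\ge 2|A|$ (as $|A|\ge 2$). In the former case $T\le 2|A|$, so $T=2|A|$ by the lower bound; hence $T_a=2$ for every $a\in A$, where $T_a$ counts independent pairs $(b,c)$ extending $a$. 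Inspection of the count shows that $T_a=2$ forces $a$'s partner $y_a\in L_{y'}$ under $M_{x'y'}$ to lie in $B$ with $B\setminus\{y_a\}=\{\beta_1,\beta_2\}$; since $|B|=3$, $y_a$ is the unique element of $B\setminus\{\beta_1,\beta_2\}$, independent of $a$, contradicting the injectivity of $a\mapsto y_a$ induced by $M_{x'y'}$. Hence at most one color of $L_v$ blocks.
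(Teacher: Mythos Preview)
Your proof is correct, but it takes a substantially different route from the paper's.

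The paper's argument is a two-line structural observation: if some colour $c\in L_v$ blocks, then after assigning $c$ to $v$ the triangle $x'y'z'$ has lists of size exactly $2$ each, and the only way a triangle with $(2,2,2)$-lists fails in DP-colouring is when the six list vertices induce two disjoint triangles in $G_L$. Once that rigid structure is identified, any other colour $c'\neq c$ leaves the ``extra'' colours $\beta(c)\in L^*_{y'}$ and $\gamma(c)\in L^*_{z'}$ available, and since these do not participate in the two forbidden triangles, a greedy colouring of $y',z',x'$ succeeds.

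Your approach avoids naming this obstruction. Instead you count independent transversals of the triangle: the lower bound $T\ge 2|A|$ and the upper bound coming from the two-point cross intersection $X_{v_1}\cap X_{v_2}$ squeeze $T_a=2$ for every $a\in A$, and you finish by observing that this forces all the $M_{x'y'}$-partners $y_a$ to coincide, violating injectivity of the matching. This is a clean double-counting argument that would generalize more readily (e.g.\ to other small gadgets), at the cost of being longer and less transparent about \emph{why} only one colour can block. One minor remark: your assertion $|L^*_{y'}|=|L^*_{z'}|=3$ really only needs to be $\ge 3$; equality is not required for your counting, since $T_a=2$ already forces $|B'|=|C'|=2$ and hence $|B|=|C|=3$ a posteriori.
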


\begin{proof}
We may assume that for some color $c$ in $L_v$, $x', y',z'$ cannot be all colored if we color $v$ with $c$. Then  each of $x',y',z'$ has exactly two available colors that induce two triangles in $G_L$.  But then we can precolor $v$ with any other color in $L_v$ such that $x',y',z'$ can be colored in order.
\end{proof}

\begin{lemma}\label{555}
Let $H_6$ be an internal special $6$-cluster isomorphic to Figure~\ref{aaa}(a). If $d(u)=d(w)=5$, then $v$ cannot be a $5^-$-vertex or a special $6$-vertex.
\end{lemma}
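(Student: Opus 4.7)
The plan is to argue by contradiction: suppose $(G,C)$ is the minimal counterexample, $H_6,u,v,w$ satisfy the hypotheses of the lemma with $d(u)=d(w)=5$, and $v$ is either a $5^-$-vertex or a special $6$-vertex; I will then construct a DP-$4$-coloring of $G$ extending $\phi_C$, contradicting the minimality of $(G,C)$.

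First I would set $S_1=\{x,y,z\}$, the three internal $4$-vertices of the special $6$-cluster $H_6$. If $v$ is a special $6$-vertex, let $K_2$ be its companion $5^-$-cluster supplied by the definition, fix an internal $(4,4,4)$-face $x'y'z'$ of $K_2$ with $y',z'\in N(v)$, and set $S_2=\{x',y',z'\}$; otherwise set $S_2=\emptyset$. By the minimality of $(G,C)$ the graph $G-(S_1\cup S_2)$ admits a DP-$4$-coloring extending $\phi_C$, and this gives an independent set $I'$ in $G_L$ with $|I'|=|V(G)|-|S_1\cup S_2|$.

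Next I would tally available colors. Each of $x,y,z$ is a $4$-vertex whose neighbors lie inside the cluster, so after $I'$ at most three of its neighbors are already colored and $|L^*_x|,|L^*_y|,|L^*_z|\ge 2$; the same count holds for the vertices of $S_2$. In the $5^-$-vertex case a direct degree count gives $|L^*_v|\ge 1$, and in the special $6$-vertex case Lemma~\ref{special6} produces at least three colors in $L_v$ each of which admits an extension to $S_2$. The hypothesis $d(u)=d(w)=5$ is precisely what limits how many colors in $L_v$ (and in the $L^*$'s of $x,y,z$) become unavailable, so the counts above are tight enough to proceed. I would then fix a color at $v$, extend to $S_2$ (when present) via Lemma~\ref{special6}, apply Lemma~\ref{straight} to straighten the three matchings of the triangle on $\{x,y,z\}$, and complete greedily around that triangle.

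The main obstacle, as usual in DP-coloring, is the degenerate case in which the two available colors at each of $x,y,z$ happen to be matched (under $M$) so as to form two vertex-disjoint triangles in $G_L$; in that situation no greedy order finishes, whereas an ordinary list-coloring argument would succeed. Following the strategy of Lemma~\ref{special6} (and the rerouting trick used in Lemma~\ref{special5}), the remedy is to swap the chosen color at $v$ for one of the other candidate colors provided in the previous paragraph; this alters $L^*$ at one of $x,y,z$ and breaks the obstruction. The resulting independent set has $|V(G)|$ elements and extends $\phi_C$, contradicting the minimality of $(G,C)$.
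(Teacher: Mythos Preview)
There is a genuine gap. You delete only $S_1=\{x,y,z\}$ (and possibly $S_2$), so $u,v,w$ all receive colors in the partial coloring $I'$. But then two pieces of your argument collapse. First, the line ``a direct degree count gives $|L^*_v|\ge 1$'' is vacuous: $v$ is already colored, so there is nothing to count, and likewise Lemma~\ref{special6} is being invoked for a vertex that is not uncolored. Second, and more seriously, your proposed remedy for the bad case---``swap the chosen color at $v$ for one of the other candidate colors''---is unavailable, because $v$'s color is fixed by $I'$; you have no freedom to recolor it without potentially destroying the coloring on the rest of $G-(S_1\cup S_2)$. So when $L^*_x,L^*_y,L^*_z$ each have size exactly~$2$ and the matchings on the triangle $xyz$ happen to form two vertex-disjoint triangles in $G_L$, your argument simply stops: no greedy order works, and you cannot alter anything outside $\{x,y,z\}$ to break the obstruction.

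The paper's proof avoids this by deleting all six vertices of $H_6$. Then $|L^*_x|=|L^*_y|=|L^*_z|=4$ (not~$2$), and crucially $|L^*_u|,|L^*_w|\ge 2$ and $|L^*_v|\ge 3$ (the hypotheses $d(u)=d(w)=5$ and $d(v)\le 5$ or special are exactly what make these bounds hold). The entire case analysis then exploits the freedom to choose colors at $u,v,w$: one straightens $uv,vw,vy,yx$, and argues about how the colors selected at $u,v,w$ interact with the list at $x$, eventually pinning down the matchings enough to finish. Your approach, by contrast, freezes $u,v,w$ at the outset and leaves no room for this maneuver.
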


\begin{figure}[H]
\includegraphics[scale=0.8]{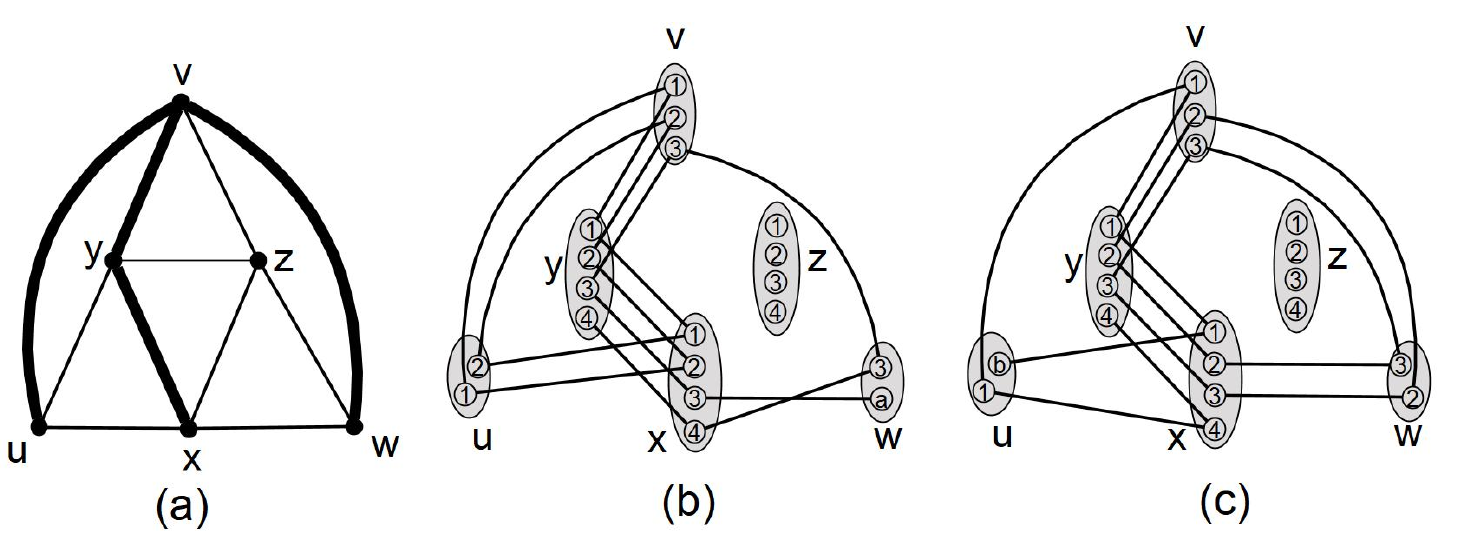}
\caption{}\label{aaa}
\end{figure}

\begin{proof}
By Lemma~\ref{diamond}, $d(v)\ge5$. Suppose that $v$ is a $5$-vertex or a special $6$-vertex. By the minimality of $(G,C)$, $G-H_6$ has a DP-4-coloring that extends $\phi_C$. Thus there is an independent set $I'$ in $G_L$ with $|I'|=|V(G)|-6$.  Note that $|L^*_u|\ge2, |L^*_x|\ge4, |L^*_y|\ge4, |L^*_w|\ge2, |L^*_z|\ge4$.  If $d(v)=5$, then $|L^*_v|\ge 3$.  When $v$ is a special $6$-vertex,  by Lemma~\ref{special6}, we may assume that $v$ has at least three available colors to use when coloring $H_6$.

By Lemma~\ref{straight}, we can assume that the edges $uv, vw, vy, yx$ are straight. If a color $(u,c)$ in $L^*_u$ and a color $(w,c')$ in $L^*_w$ have a common neighbor in $x$, then we select $(u,c), (w,c')$, and $v,y,z,x$ can be colored in order, a contradiction. So we assume that none of the two colors from $L^*_u$ and $L^*_w$ have common neighbors in $L^*_x$. For each $i\in[3]$, if we can choose $(v,i), (x,i)$ such that $(v,i)$ and $(x,i)$ forbidden at most one color at both $u$ and $w$, then $u,w,z,y$ can be colored in order, a contradiction. So we may assume that $(v,i)$ and $(x,i)$ forbidden two colors at $u$ or $w$ for each $i\in[3]$. By symmetry we have two cases, see Figure~\ref{aaa}(b)(c). Choose $c=2$ at Case (b) and choose $c=b$ at Case (c). If $(u,c)(y,1)\in E(G_L)$, then we select $(u,c), (v,1)$, then $w,x,z,y$ can be colored in order.  If $(u,c)(y,j)\in E(G_L)$ for $j\in\{2,3,4\}$, then we select $(u,c), (x,j)$, then $w,v,z,y$ can be colored in order, a contradiction.
\end{proof}

\begin{figure}[H]
\includegraphics[scale=0.8]{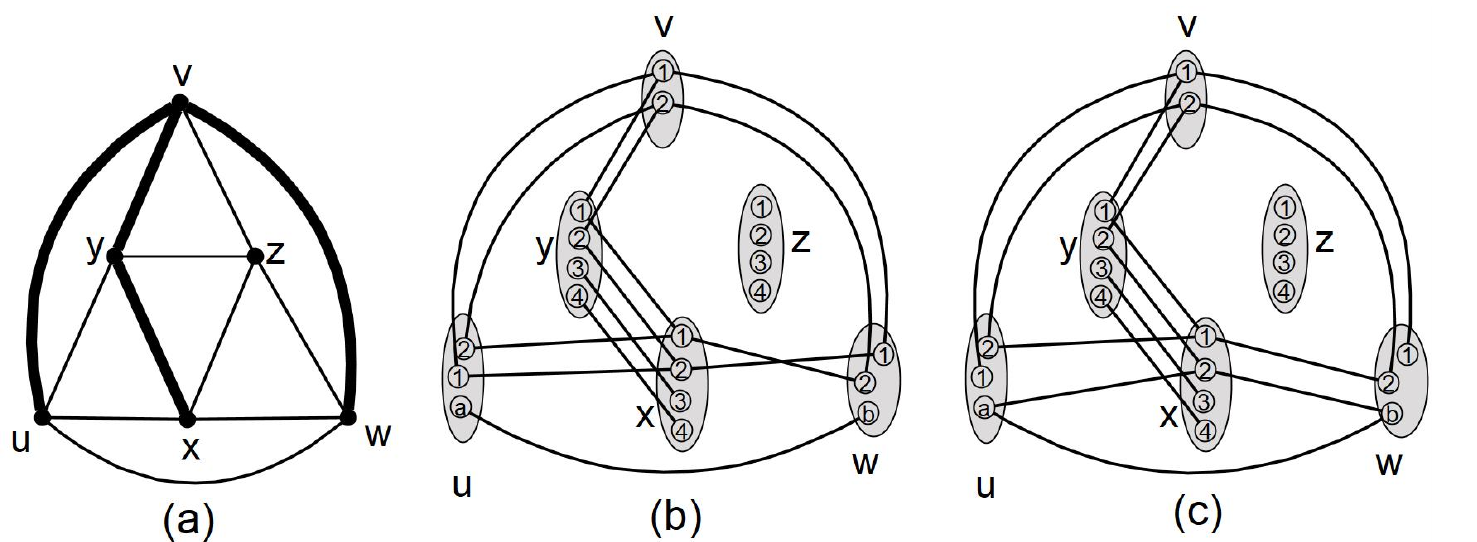}
\caption{} \label{bbb}
\end{figure}

\begin{lemma}\label{556}
Let $H_7$ be an internal $7$-cluster, see Figure~\ref{bbb}(a). If $\max\{d(u),d(v),d(w)\}\le6$, then $H_7$ is incident to at most one  $5$-vertex or special $6$-vertex.
\end{lemma}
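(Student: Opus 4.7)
The plan is to argue by contradiction, mimicking the strategies used in Lemmas~\ref{special5} and~\ref{555}. I would assume $H_7$ is incident to two vertices, say $a$ and $b$, each of which is a $5$-vertex or a special $6$-vertex. By the minimality of $(G,C)$, the graph $G-V(H_7)$ admits a DP-$4$-coloring extending $\phi_C$; let $I'$ be the corresponding independent set in $G_L$. The goal is to extend $I'$ to an independent set covering $V(H_7)$, which would contradict the minimality of $(G,C)$.

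For each $x\in V(H_7)$, let $L^*_x$ be the set of colors still available at $x$ after $I'$. A direct count gives $|L^*_x|\ge 2$ for every internal $4$-vertex of $H_7$ that lies at a ``leaf'' $3$-face of the cluster (it has at most two neighbors outside $H_7$). For the two distinguished vertices $a$ and $b$, either the degree bound $d=5$ or Lemma~\ref{special6} (in the special $6$-vertex case) yields at least three usable colors. Finally, since $d(u), d(v), d(w)\le 6$ and each of $u,v,w$ has several neighbors inside $H_7$, we obtain $|L^*_u|, |L^*_v|, |L^*_w|\ge 3$.

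Using Lemma~\ref{straight}, I would rename colors along a spanning tree of $H_7$ so that all its tree edges are straight, and then attempt a greedy coloring in a carefully chosen order: start with $a$ and $b$ to exploit their extra flexibility, then color $u,v,w$, and finish with the remaining leaf $4$-vertices. In most configurations a greedy choice at $a$ (forbidding at most one color at each of its neighbors inside $H_7$) leaves enough colors for the rest of the cluster to be colored in order.

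The main obstacle, as in Lemma~\ref{555}, arises when the matching assignments conspire so that every leaf $4$-vertex has exactly two usable colors whose matchings form a ``triangle'' pattern with two colors of a central vertex, blocking every greedy order. I would enumerate these rigid configurations up to symmetry, as in Figure~\ref{aaa}(b)(c), and resolve each by the re-coloring trick from Lemma~\ref{555}: whenever a particular color at $a$ obstructs its neighboring leaf, an alternative color at $a$ (guaranteed by $|L^*_a|\ge 3$) unblocks the leaf without creating new conflicts at $b$, $u$, $v$, or $w$; the special $6$-vertex case uses Lemma~\ref{special6} to guarantee that one of the three forced colors at $a$ still admits a completion of the nearby $(4,4,4)$-face. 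This produces an independent set $I^*$ of size $|V(H_7)|$ extending $I'$, contradicting minimality.
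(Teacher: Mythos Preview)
Your outline has the right overall shape (contradiction via minimality, list counting, straightening, case analysis), but it rests on a misreading of the cluster $H_7$ and therefore on incorrect list sizes. In $H_7$ (Figure~\ref{cluster}(11)) there are only six vertices: the outer triangle $u,v,w$ and the three interior $4$-vertices $x,y,z$. The two ``distinguished'' vertices you call $a,b$ are not extra vertices; they are two of $u,v,w$ (by symmetry $u$ and $w$). The interior vertices $x,y,z$ have \emph{all} four neighbours inside $H_7$, so $|L^*_x|=|L^*_y|=|L^*_z|=4$, not $\ge 2$; there are no ``leaf $4$-vertices with two outside neighbours''. Conversely, the remaining outer vertex $v$ may be a non-special $6$-vertex, and since it is $4$-type to $H_7$ it has two neighbours outside, giving only $|L^*_v|\ge 2$, not $\ge 3$. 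Your greedy scheme ``start with $a,b$, then colour $u,v,w$'' therefore does not even parse, and the slack you expect at $v$ is not there.

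The more serious gap is that the real difficulty is not dispatched by a greedy order plus the recolouring trick of Lemma~\ref{555}. With only $|L^*_v|\ge 2$, the obstruction is governed by the (non-straightened) matchings $M_{ux}$, $M_{wx}$, $M_{uy}$ and $M_{uw}$. The paper's proof first forces $(u,a)(w,b)\in M_{uw}$ for the ``extra'' colours $a,b$, then shows $(u,i)(x,i)\notin M_{ux}$ and $(w,i)(x,i)\notin M_{wx}$ for $i\in\{1,2\}$, and finally pins down the neighbours of $(x,1),(x,2)$ in $L^*_u$ and $L^*_w$, reducing to exactly two rigid matching patterns (Figure~\ref{bbb}(b)(c)); each is then broken by a three-way split on the edge $(u,2)(y,j)$. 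None of this structure is visible from the Lemma~\ref{555} template you invoke, and a generic ``enumerate rigid configurations'' promise does not substitute for it. If you try to carry your sketch through with the corrected list sizes, you will find that the first nontrivial step is precisely isolating the interaction between $L^*_x$ and the two short lists $L^*_u,L^*_w$ through the crooked edges $ux,wx,uw$.
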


\begin{proof}
Suppose otherwise, by symmetry let $u,w$ be $5$-vertices or special $6$-vertices. By the minimality of $(G,C)$, the graph $G-H_7$ has a DP-4-coloring that extends $\phi_C$. Thus there is an independent set $I'$ in $G_L$ with $|I'|=|V(G)|-6$. Note that $ |L^*_v|\ge2$, and $|L^*_x|, |L^*_y|, |L^*_z|\ge4$. By Lemma~\ref{special6}, each of $u,w$ has at least three available colors to use when coloring $H_7$.

By Lemma~\ref{straight}, we can assume that the edges $uv, vw, vy, yx$ are straight.  Let $(v,1), (v,2)\in L^*(v)$ and by straightness, for $i\in \{1,2\}$,  let $(v,i)(u,i), (v,i)(y,i), (y,i)(x,i), (v,i)(w,i)\in E(G_L)$.  We further assume that $(y,3)(x,3), (y,4)(x,4)\in E(G_L)$.   Let $(u,a)\in L^*(u)-\{(u,1), (u,2)\}$ and $(w,b)\in L^*(w)-\{(w,1), (w,2)\}$.

We may assume that $(u,a)(w,b)\in E(G_L)$.  For otherwise, we extend $I'$ to include $(u,a),(w,b)$. Then $|L^*_x|, |L^*_v|\ge2, |L^*_y|, |L^*_z|\ge3$. We pick a color in $L^*_y$ such that $v$ still has two available colors. Then $x,z,v$ can be colored in order, a contradiction.

For each $i\in[2]$, if $(u,i)(x,i)\in E(G_L)$, then we select $(x,i),(v,i)$, then $w,u,z,y$ can be colored in order. So $(u,i)(x,i)\notin E(G_L)$. Similarly, $(w,i)(x,i)\notin E(G_L)$.

We observe that $(x,1), (x,2)$ must have a neighbor in $L^*_u$.  Suppose by symmetry that $(x,1)$  has no neighbors in $L^*_u$.  Then we select $(x,1),(v,1)$ and $w,z,u,y$ can be properly colored in order, a contradiction.  Thus either $(x,1)(u,2)\in E(G_L)$ or $(x,1)(u,a)\in E(G_L)$.  Note that if $(x,1)(u,a)\in E(G_L)$, then we have $(x,2)(u,1)\in E(G_L)$.  Therefore, we have either $(x,1)(u,2)\in E(G_L)$ or $(x,2)(u,1)\in E(G_L)$.  By symmetry of colors, we may assume that $(x,1)(u,2)\in E(G_L)$.    Then either $(x,2)(u,1)\in E(G_L)$ or $(x,2)(u,a)\in E(G_L)$.

Consider the case $(x,2)(u,1)\in E(G_L)$.  If $(x,1)(w,b)\in E(G_L)$, then we select $(u,2),(w,b)$. So $|L^*_v|\ge1, |L^*_x|,|L^*_y|, |L^*_z|\ge3$. Then $v,y,z,x$ can be colored in order, a contradiction. So $(x,1)(w,b)\notin E(G_L)$.  Similarly, we have $(x,2)(w,b)\notin E(G_L)$. Therefore we have $(x,1)(w,2),(x,2)(w,1)\in E(G_L)$ since we showed that $(w,i)(x,i)\notin E(G_L)$ for $i \in \{1, 2\}$, which gives us Figure~\ref{bbb}(b).

Consider the case of $(x,2)(u,a)\in E(G_L)$.  If $(x,1)(w,b)\in E(G_L)$, we select
$(u, 2), (w, b)$, then $v,y,z,x$ can be colored in order, a contradiction.  Thus $(x,1)(w,2)\in E(G_L)$. Next, if $(x,2)(w,1)\in E(G_L)$, we select $(u, a), (w, 1)$, then $v,y,z,x$ can be colored in order, a contradiction.   Thus $(x,2)(w, b)\in E(G_L)$.  So we have $(x,1)(w,2)\in E(G_L)$, which gives us Figure~\ref{bbb}(c).


Now, we show how to color Figure~\ref{bbb} (b) and (c).

{\bf Case 1:} $(u,2)(y,1)\in E(G_L)$.   In this case,  we select $(u,2), (v,1), (w,b)$ in both (b) and (c).  Then $|L^*_x|,|L^*_z| \ge 2, |L^*_y|\ge3$, and $x,z,y$ can be colored in order, a contradiction.

{\bf Case 2:} $(u,2)(y,j)\in E(G_L)$ for some $j \in \{3, 4\}$. In this case,  we select $(x,1), (v,2), (y, j)$ in both (b) and (c).  Then  $|L^*_u|, |L^*_w|\ge2, |L^*_z|\ge1$, and $z,w,u$ can be colored in order, a contradiction.

{\bf Case 3:} $(u,2)(y,2)\in E(G_L)$.  In this case, for (b),  we select $(v,1), (x,2), (u, 2)$ and $w,z,y$ can be colored in order.  For (c), either $(u,a)(y,j)\in E(G_L)$ for some $j \in \{1, 2\}$,  or  $(u,a)(y,j)\in E(G_L)$ for some $j \in \{3, 4\}$. In the former case, we select $(u,a), (v,2), (x, 1)$ and color $w,z,y$ in order;  in the latter case, we select $(u,a),(x,j)$ and color $w,v, z,y$ in order.

This completes the proof of Lemma \ref{556}.
\end{proof}

\section{Discharging procedure}

{\bf Proof of Theorem~\ref{main2}}. We are now ready to  complete the proof of Theorem~\ref{main2} by a discharging procedure.  Let $x\in V(G)\cup F(G)-\{C\}$ has an initial charge of $\mu(x)=d(x)-4$, and $\mu(C)=d(C)+4$. By Euler's Formula, $\sum_{x\in V\cup F}\mu(x)=0$.
Let $\mu^*(x)$ be the charge of $x\in V\cup F$ after the discharge procedure. To obtain a contradiction, we shall prove that $\mu^*(x)\ge 0$ for all $x\in V(G)\cup F(G)$ and $\mu^*(C)>0$.

\begin{observation}\label{obs}
Every good $5$-vertex is on at most one special cluster.
\end{observation}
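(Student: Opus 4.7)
The plan is to recognize this observation as essentially an immediate corollary of Lemma~\ref{special5}, rephrased into the terminology that will be convenient during the discharging argument in Section~3. No new reducibility argument, matching-assignment manipulation, or precoloring extension is required.

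First I would unpack the terminology carefully. In the excerpt, the attribute ``good'' is defined only for internal $5^+$-vertices $v$ that sit on a cluster $H$, and it amounts to saying that $H$ is special (as opposed to some non-special cluster). Consequently, a ``good $5$-vertex'' is, by definition, an internal $5$-vertex that lies on at least one special cluster (the cluster witnessing its goodness). So the content of the observation is the assertion that \emph{no such vertex can simultaneously lie on a second special cluster}.

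Next I would invoke Lemma~\ref{special5} directly. That lemma says: every internal $5$-vertex in $G$ lies on at most one special cluster. Since a good $5$-vertex is in particular an internal $5$-vertex, the conclusion is immediate. One short line of text suffices to spell this out in the paper.

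The main ``obstacle'' here is purely terminological rather than mathematical: one has to confirm that the word ``good'' in the phrase ``good $5$-vertex'' is being used in the sense established earlier in the paper (good with respect to \emph{some} cluster through $v$), so that internality is automatic. Once this is checked, Observation~\ref{obs} is just Lemma~\ref{special5} packaged for convenient reuse in the discharging. The only care needed when writing the proof is to cross-reference the definition of ``good'' and of ``special cluster'' so the reader does not expect a separate argument.
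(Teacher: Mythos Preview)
Your proposal is correct and matches the paper's own argument: the observation is nothing more than a restatement of Lemma~\ref{special5} once one unpacks the definition of a ``good'' $5$-vertex, and the paper's proof likewise reduces immediately to a contradiction with that lemma. The only cosmetic difference is that the paper spells out the four neighbours on two $(4,4,4)$-faces before citing Lemma~\ref{special5}, whereas you cite it directly.
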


\begin{proof}
If a good $5$-vertex $v$ is on two special clusters, then $v$ has four neighbors $v_1, v_2, v_3, v_4$ such that $v_1v_2$ and $v_3v_4$ are both on $(4,4,4)$-faces.  This contradicts Lemma~\ref{special5}.
\end{proof}

\medskip
\noindent {\bf The discharging rules:}

\begin{enumerate}[(R1)]
\item Each $5^+$-face $f$ gives $\frac{1}{2}$ to each adjacent $3$-face, and moreover, if $e=uv\in E(f)$ is not on a $3$-face, then $f$ gives $\frac{1}{4}$ to each of $u$ and $v$. Each internal $4$-vertex having at leasts three incident edges in a cluster gives the charge it obtained from incident $5^+$-faces to the cluster.

 \item Let $H_k$ be a $k$-cluster with $k\in [5]$. Then $H_k$ gets $\frac{1}{2}$ from each incident $2$-type $5^+$-vertex that is either good or on a $4$-face adjacent to $H_k$, $\frac{1}{2}$ from each $3$-type $5$-vertex that is bad to $H_k$ or incident $3$-type $6^+$-vertex,  $1$ from each good $3$-type $5$-vertex, and $\frac{3}{2}$ from each incident $4$-type $5^+$-vertex.  Note that $3^+$-clusters cannot be adjacent to $4$-faces.

\item A $6$-cluster $H_6$ gets $\frac{1}{2}$ from each incident bad $3$-type $5$-vertex, $1$ from each incident good  $3$-type $5$-vertex, $\frac{3}{2}$ from each incident $4$-type $5^+$-vertex or $3$-type $6^+$-vertex, $2$ from each incident good $4$-type $6^+$-vertex if $H_6$ contains two $3$-type $5$-vertices.

\item  A $7$-cluster $H_7$ gets $\frac{3}{2}$ from each incident $5$-vertex or special $6$-vertex, $2$ from each incident non-special $6$-vertex, and $\frac{5}{2}$ from each incident $7^+$-vertex.

\item The outer-face $C$ gets $\mu(v)$ from each incident vertex and gives $1$ to each non-internal $3$-face.
\end{enumerate}

{\bf{Remark:}} (a) Let $v$ be a $4^+$-vertex that incident with four consecutive faces $f_1,f_2,f_3,f_4$ in order. If $d(f_1),d(f_4)\ge8$ and $d(f_2)=d(f_3)=3$, then by (R1)(R2) $v$ gives $\frac{1}{2}$ to the cluster $H_k$ containing $f_2,f_3$, when $k\in[5]$.

(b) Let $H$ be a cluster and $v\in H$ be an internal $5^+$-vertex. By (R2)-(R4) $v$ gives at most $\frac{1}{2}$ to $H$ if $v$ is $2$-type to $H$ (note that $6^+$-clusters contain no 2-type vertices), at most 1 to $H$ if $v$ is $3$-type $5$-vertex to $H$, $\frac{3}{2}$ to $H$ if $v$ is a $4$-type $5$-vertex or $3$-type $6^+$-vertex of $H$, at most $2$ to $H$ if $v$ is a $4$-type $6$-vertex of $H$, at most $\frac{5}{2}$ to $H$ if $v$ is a $4$-type $7^+$-vertex of $H$.

\medskip

First we check the final charge of vertices in $G$. Let $v$ be a vertex in $G$. If $v\in V(C)$, then $\mu^*(v)\ge0$ by (R5). So we may assume that $v\notin V(C)$.  By Lemma~\ref{minimum} $d(v)\ge4$. If $d(v)=4$, then by (R1), $\mu^*(v)=\mu(v)=d(v)-4=0$.

Suppose $d(v)=5$.  Then $v$ is on at most two clusters.  If $v$ is on at most one cluster, then by (R1)-(R3), $v$ gives out more than $1$ to the cluster only if $v$ is a $4$-type vertex to the cluster.  But in this case,  each face adjacent to the cluster is a $8^+$-face since $G$ contains no $7$-cycles. So $v$ is incident with two consecutive $8^+$-faces, thus obtains $\frac{1}{4}\cdot 2=\frac{1}{2}$ by (R1). So $\mu^*(v)\ge 5-4-\frac{3}{2}+\frac{1}{2}=0$.   Now let $v$ be on two clusters.  Then $v$ is $3^-$-type to one cluster and $2$-type to the other cluster, and none of the cluster can be a $7$-cluster.  By (R2)(R3), $v$ gives more than $\frac{1}{2}$ to one cluster when $v$ is a good $3$-type vertex.  In this case, $v$ is a $2$-type $5$-vertex to the other cluster, say $H$. It follows that $v$ cannot be on a $4$-face, and by Observation~\ref{obs}, $v$ is not good to $H$.  So by (R2)(R3), $v$ gives no charge to $H$.  Therefore, $\mu^*(v)\ge 5-4-1=0$.


Now let $d(v)\ge 6$.  Assume that $v$ gives $\frac{1}{2}$ to $x_2$ clusters, $\frac{3}{2}$ to $x_3$ clusters, $2$ to $x_4$ clusters, and $\frac{5}{2}$ to $x_4'$ clusters.  Since $G$ contains no butterfly, $x_4+x_4'\le 1$.  Then by (R2)-(R4),   $d(v)\ge 2x_2+3x_3+4(x_4+x_4')$ and $\mu^*(v)\ge d(v)-4-(\frac{1}{2}x_2+\frac{3}{2}x_3+2x_4+\frac{5}{2}x_4')$.  It follows that
\begin{align*}
\mu^*(v)&\ge d(v)-4-\frac{1}{2}(2x_2+3x_3+4x_4+4x_4'-x_2+x_4')\\
&\ge d(v)-4-\frac{1}{2}d(v)+\frac{1}{2}(x_2-x_4')=\frac{1}{2}(d(v)-8+x_2-x_4').
\end{align*}

So if $d(v)\ge 8$, then $\mu^*(v)<0$ only if $d(v)=8$, $x_2=0$ and $x_4'=1$. In this case, $v$ is on a $7$-cluster and $3^+$-type to a $5^-$-cluster, so by (R2)(R4), $\mu^*(v)\ge 8-4-\frac{5}{2}-\frac{3}{2}=0$.

For $d(v)=7$,  $\mu^*(v)\ge \frac{1}{2}(x_2-x_4'-1)$.  If $x_2=0$, then $v$ is $3^+$-type to its clusters so $v$ is on at most two clusters. Note that $v$ is on at most one $6^+$-cluster since $G$ has no butterfly. If $v$ is on a $6^+$-cluster, then by (R2)(R3), either $v$ is $4$-type to a $5^-$-cluster, thus $\mu^*(v)\ge 7-4-\frac{3}{2}-\frac{3}{2}=0$, or $v$ is $3^-$-type to a $5^-$-cluster, which gets at most $\frac{1}{2}$ from $v$ since $G$ contains no butterflies. So $\mu^*(v)\ge 7-4-(\frac{1}{2}+\frac{5}{2})=0$. If $v$ is not on any $6^+$-cluster, then by (R2), $v$ gives at most $\frac{3}{2}$ to each of the two clusters. So $\mu^*(v)\ge 7-4-\frac{3}{2}-\frac{3}{2}=0$.   Now let $x_2\ge 1$, then $\mu^*(v)<0$ only if $x_2=x_4'=1$ since $x_4'\le1$.  It implies $x_3=0$.  By (R2)(R4), $\mu^*(v)\ge 7-4-\frac{5}{2}-\frac{1}{2}=0$.

Let $d(v)=6$.  By (R2)-(R4), $x_4'=0$. So $\mu^*(v)\ge \frac{1}{2}(x_2-2)$.  We may assume that $x_2\le 1$.  Assume first that $x_2=0$. Then $v$ is $3^+$-type to at most two clusters, and if $v$ is $4$-type to a cluster, then $v$ is only on one cluster.  Thus by (R2)(R3), $\mu^*(v)\ge 6-4-\max\{\frac{1}{2}+\frac{3}{2}, 2\}=0$. So let $x_2=1$.  Then $v$ is on at most one $3^+$-cluster, and in particular,  $x_3+x_4\le 1$.  Thus $\mu^*(v)<0$ only if $x_4=1$ and $x_3=0$.  So $v$ is not on a $4$-face, and by (R2)-(R4), $v$ is good $2$-type to one $5^-$-cluster and good $4$-type to one $6$-cluster which contains two $3$-type $5$-vertices, contrary to Lemma~\ref{555}.

\medskip

Now we check the final charge of faces in $F(G)-\{C\}$. Let $f$ be a face in $G$. Since $G$ contains no $7$-cycles,  a $5$-face is adjacent to at most one $3$-face and a $6$-face is not adjacent to $3$-faces.  By (R1),  $\mu^*(f)\ge0$ when $f$ is a $4^+$-face. So we only need to consider the final charge of $3$-faces. Let $d(f)=3$.   Since $G$ contains no $7$-cycles or separating $3$-cycles,  $f$ must be in a $k$-cluster $H_k$ for some $k\in[7]$, which are depicted in Figure~\ref{cluster}.  Let $\mu(H_k)=\mu(f_1)+\mu(f_2)+\cdots+\mu(f_k)=-k$,
where $f_1, f_2, \cdots,  f_k$ are $3$-faces in $H_k$,  and define $\mu^*(H_k)=\mu^*(f_1)+\mu^*(f_2)+\cdots+\mu^*(f_k)$.  We shall show that $\mu^*(H_k)\geq 0$ for each $H_k$, which would imply that all $3$-faces can get enough charge to have nonnegative final charges.

{\bf Case 1. $1$-cluster $H_1$.} Since $G$ contains no $7$-cycles,  $H_1$ shares at least two edges with $5^+$-faces. So $H_1$ either gets $1$ from $C$ by (R5) or $\frac{1}{2}\cdot2$ from adjacent $5^+$-faces by (R1). Thus, $\mu^*(H_1)\ge-1+\frac{1}{2}\cdot2=0$.

\smallskip

{\bf Case 2. $H_2$ is a $2$-cluster isomorphic to Figure~\ref{cluster} (2).} Since $G$ contains no $7$-cycles, $H_2$ shares edges only with $4$-faces or $8^+$-faces; furthermore, $H_2$ shares at least two edges with $8^+$-faces, which gives at least $1$ to $H_2$ by (R1). So we only need 1 more charge to make $\mu^*(H_2)\ge 0$. We may assume that $V(H_2)\cap V(C)=\emptyset$, for otherwise, by (R5) $H_2$ gets at least $1$ from $C$. If $H_2$ shares at least three edges with $8^+$-faces, then by symmetry say $uv,ux$ are on $8^+$-faces. So $H_2$ gets $\frac{1}{2}$ from $u$ by Remark (a) and extra $\frac{1}{2}$ from adjacent $8^+$-faces by (R1). If $H_2$ shares exactly two edges with $8^+$-faces, then by symmetry $wv,wx$ are on $4$-faces or $wv, ux$ are on $4$-faces.
In the former case, $d(w)\ge 5$ and thus $w$ gives $\frac{1}{2}$ to $H_2$ by (R2), and by Remark (a), $u$ gives at least $\frac{1}{2}$ to $H_2$ as well.   Consider the latter case. If both $u,w$ are $4$-vertices, then each of them transfers $\frac{1}{4}$ they got from $8^+$-faces to $H_2$, and by Lemma~\ref{diamond}, $v$ or $x$ is a $5^+$-vertex, which by (R2), gives at least $\frac{1}{2}$ to $H_2$, therefore $\mu^*(H_2)\ge -2++\frac{1}{2}\cdot 2+\frac{1}{4}\cdot 2+\frac{1}{2}=0$. By symmetry, assume that $d(u)\ge 5$.  Now by (R2), $u$ gives $\frac{1}{2}$ to $H_2$, and each of $v,w,x$ gives $\frac{1}{2}$ (if it is a $5^+$-vertex) or at least $\frac{1}{4}$ (if it is a $4$-vertex) to $H_2$.  Therefore, $mu^*(H_2)\ge -2+\frac{1}{2}\cdot 2+\frac{1}{2}+\frac{1}{4}\cdot 3>0$.
\smallskip

Let $H_k$ be a $k$-cluster with $k\ge3$. All faces adjacent to $H_k$ are $8^+$-faces since $G$ has no $7$-cycles.

{\bf Case 3. $H_3$ is a 3-cluster isomorphic to Figure~\ref{cluster} (3).} By (R1), $H_3$ gets $\frac{1}{2}\cdot5$ from adjacent $8^+$-faces. In addition, $H_3$ either gets at least $1$ from $C$ by (R5) or $\frac{1}{2}$ from each of $v$ and $w$ by (R2) (see Remark (a)). So $\mu^*(H_3)\ge-3+\frac{1}{2}\cdot5+1>0$.
\smallskip

{\bf Case 4. $H_4$ is a $4$-cluster isomorphic to one in Figure~\ref{cluster} (4)-(7).}

Case 4.1. $H_4$ is isomorphic to one of Figure~\ref{cluster} (4)-(6). By (R1),  $H_4$ gets $\frac{1}{2}\cdot4,\frac{1}{2}\cdot6,\frac{1}{2}\cdot6$ from adjacent $8^+$-faces in (4),(5),(6), respectively. If any vertex in $H_4$ is on $C$, then by (R5), $H_4$ gets at least $2,1,1$ from $C$ in (4),(5),(6) respectively, so $\mu^*(H_4)\ge 0$. If $H_4$ is internal, then by (R2), in (4) it gets $\frac{1}{2}$ from each of $u,v,w,x$, in (5) it gets $\frac{1}{2}$ from each of $v$ and $y$, and in (6), it gets $\frac{1}{2}$ from each of $v,w,x$. In any case, $\mu^*(H_4)\ge-4+\min\{\frac{1}{2}\cdot4+2,\frac{1}{2}\cdot6+\frac{1}{2}\cdot 2,\frac{1}{2}\cdot 6+\frac{1}{2}\cdot 3\}=0$.

Case 4.2. $H_4$ is isomorphic to Figure~\ref{cluster}  (7). By (R1), $H_4$ gets $\frac{1}{2}\cdot6$ from adjacent $8^+$-faces.  We need to find $1$ more charge to make $\mu^*(H_4)\ge 0$. We may assume that $H_4$ is internal, for otherwise, $H_4$ gets at least $1$ from $C$ by (R5). If one of $x,y,z$ is a $5^+$-vertex, say $x$, then $H_4$ gets $\frac{3}{2}$ from $x$ by (R2). Otherwise, by Lemma~\ref{diamond}, $d(u),d(v),d(w)\ge5$. By (R2), each of $u,v,w$ gives $\frac{1}{2}$ to $H_4$.

\smallskip

{\bf Case 5. $H_5$ is a $5$-cluster isomorphic to Figure~\ref{cluster} (8) or (9).}

Case 5.1. $H_5$ is isomorphic to Figure~\ref{cluster}  (8). By (R1)(R2), $H_5$ gets $1$ from $z$ and $\frac{1}{2}\cdot5$ from adjacent $8^+$-faces. If $H_5$ is not internal, then it gets at least $2$ from $C$ by (R5), thus $\mu^*(H_5)\ge -5+1+\frac{5}{2}+2>0$. If $H_5$ is internal, then $H_5$ gets at least $\frac{1}{2}$ from each of $u, v, w, x, y$ by (R2), thus $\mu^*(H_5)\ge -5+1+\frac{5}{2}+\frac{1}{2}\cdot 5>0$.


Case 5.2. $H_5$ is isomorphic to Figure~\ref{cluster} (9). By (R1), $H_5$ gets $\frac{1}{2}\cdot5$ from adjacent $8^+$-faces.  We may assume that both $y$ and $z$ are internal, for otherwise, $H_5$ gets $3$ from $C$ by (R5) and $\mu^*(H_5)\ge -5+\frac{5}{2}+3>0$. First assume that both $y$ and $z$ are internal $4$-vertices. Then by Lemma~\ref{diamond}, $u,v,w$ are $5^+$-vertices or on $C$. Note that if $u,w$ are internal $5$-vertices, then they are good $3$-type $5$-vertices to $H_5$. By (R2)(R5), $H_5$ gets at least $2$ through $u,w$ and at least $\frac{1}{2}$ from $v$ by (R2), and $\mu^*(H_5)\ge -5+\frac{5}{2}+2+\frac{1}{2}=0$.  Now by symmetry, let $d(y)\ge 5$. By (R2), $H_5$ gets $\frac{3}{2}$ from $y$. Now, whether $u$ and $w$ are $4$-vertices, $5^+$-vertices, or on $C$, $H_5$ gets at least $\frac{1}{2}$ from each of them, so $\mu^*(H_5)\ge -5+\frac{5}{2}+\frac{3}{2}+\frac{1}{2}\cdot 2=0$.


\smallskip

{\bf Case 6. $H_6$ is a $6$-cluster isomorphic to Figure~\ref{cluster}  (10).} By (R1), $H_6$ gets $\frac{1}{2}\cdot4$ from adjacent $8^+$-faces. So we need to find another $4$ to make $\mu^*(H_6)\ge0$. Since $C$ is a good $3$-cycle, $|V(H_6)\cap V(C)|\le2$. If $|V(H_6)\cap V(C)|=2$, then $H_6$ gets at least $4$ from $C$ by (R5). If $|V(H_6)\cap V(C)|=1$, then by symmetry either $u\in V(C)$ or $v\in V(C)$. In the case of $u\in V(C)$, $H_6$ gets $2$ from $C$ by (R5). By Lemma~\ref{diamond}, at least one of $v,x$ is a $5^+$-vertex, by symmetry say $v$. Then by (R3), $H_6$ gets $\frac{3}{2}$ from $v$. If $d(x)=4$, then by Lemma~\ref{diamond} $d(w)\ge5$, so $w$ gives at least $1$ to $H_6$ by (R3);  If $d(x)\ge5$, then $x$ gives $\frac{3}{2}$ to $H_6$ by (R3);  So $H_4$ gets at least $2+\frac{3}{2}+1>4$. Now consider the case $v\in V(C)$. By (R5), $H_6$ gets $3$ from $C$. By Lemma~\ref{diamond} at least one of $u,x,w$ is a $5^+$-vertex, which gives at least $1$ to $H_6$ by (R3). So we assume that $|V(H_6)\cap V(C)|=0$. By Lemma~\ref{diamond}, at least one of $x$ and $v$ is a $5^+$-vertex. If both $x$ and $v$ are $5^+$-vertices, then by (R3), $H_6$ gets $\frac{3}{2}$ from each of $x$ and $v$, and $\frac{1}{2}$ from each of $u$ and $w$. So $H_6$ gets  $\frac{3}{2}\cdot2+\frac{1}{2}\cdot2=4$. If one of $x$ and $v$, by symmetry say $x$ is $4$-vertex, then by Lemma~\ref{diamond}, $d(u), d(v), d(w)\ge 5$ and by Lemma~\ref{555}, one of $u,v,w$ has degree more than $5$. If $d(v)\ge 6$, then by (R3) $H_6$ gets $2$ from $v$ and $1$ from each of $u$ and $w$; and $d(u)\ge 6$, then by (R3), $H_6$ gets at least $\frac{3}{2}$ from each of $v$ and $u$ and at least $1$ from $w$; and $d(w)\ge 6$, then by (R3), $H_6$ gets at least $\frac{3}{2}$ from each of $v$ and $w$ and at least $1$ from $u$. In any case, $H_6$ gets at least $\min\{1+1+2,1+\frac{3}{2}+\frac{3}{2}\}=4$.


\smallskip

{\bf Case 7. $H_7$ is a $7$-cluster isomorphic to Figure~\ref{cluster} (11).} By (R1), $H_7$ gets $\frac{1}{2}\cdot3$ from adjacent $8^+$-faces.  So we need to find another $\frac{11}{2}$ to make $\mu^*(H_7)\ge0$. By Lemma~\ref{diamond} each of $u,v,w$ is either on $C$ or an internal $5^+$-vertex; by (R3), $H_7$ gets at least $\frac{3}{2}$ from each of $u,v,w$ when it is internal.   Since $C$ is a good $3$-cycle, $|V(H_7)\cap V(C)|\le2$. If $|V(H_7)\cap V(C)|\ge1$, then by (R5), $H_7$ gets at least $3$ from $C$.   So $H_7$ gets at least $\min\{3+\frac{3}{2}\cdot 2, 5+\frac{3}{2}\}>\frac{11}{2}$. So we assume that $H_7$ is internal.   If one of $u,v,w$ is a $7^+$-vertex, then by (R4), $\mu^*(H_7)\ge -7+\frac{1}{2}\cdot 3+\frac{5}{2}+\frac{3}{2}\cdot 2=0$. So we assume that $\max\{d(u),d(v), d(w)\}\le 6$. By Lemma~\ref{556}, $H_7$ contains at most one $5$-vertex or special $6$-vertex, so by (R4), $\mu^*(H_7)\ge -7+\frac{1}{2}\cdot 3+\frac{3}{2}+2\cdot 2=0$.


\medskip

To finish the proof, we show that the outer face $C$ has positive final charge.  Let $f_3$ be the number of non-internal $3$-faces. Let $E(C, V(G)-C)$ be the set of edges between $C$ and $V(G)-C$ and let $e$ be its size. Note that $e \geq f_3$. Then by (R5),
\begin{align*}
\mu^*(C)&=3+4+\sum_{v\in V(C)} (d(v)-4)-f_3=7+\sum_{v\in V(C)}(d(v)-2)-2\cdot 3-f_3\\
&=1+e-f_3\ge1>0.
\end{align*}

\section{Final Remarks}
In Figure~\ref{cluster-two}, we illustrate two matching assignments in a $6$-cluster and a $7$-cluster that prevent us from finding an independent set of order $6$. The examples show that Lemma~\ref{555} and Lemma~\ref{556} cannot be improved.  The examples also show that it would be difficult to improve our results by removing the requirement of forbidding butterflies.  For example, in our proof, $7^+$-vertices have no burden to afford the charges, but if we allow butterflies, even a $8$-vertex may not be able to afford to give charges when it is on two $6^+$-clusters.

\begin{figure}[H]
\includegraphics[scale=0.65]{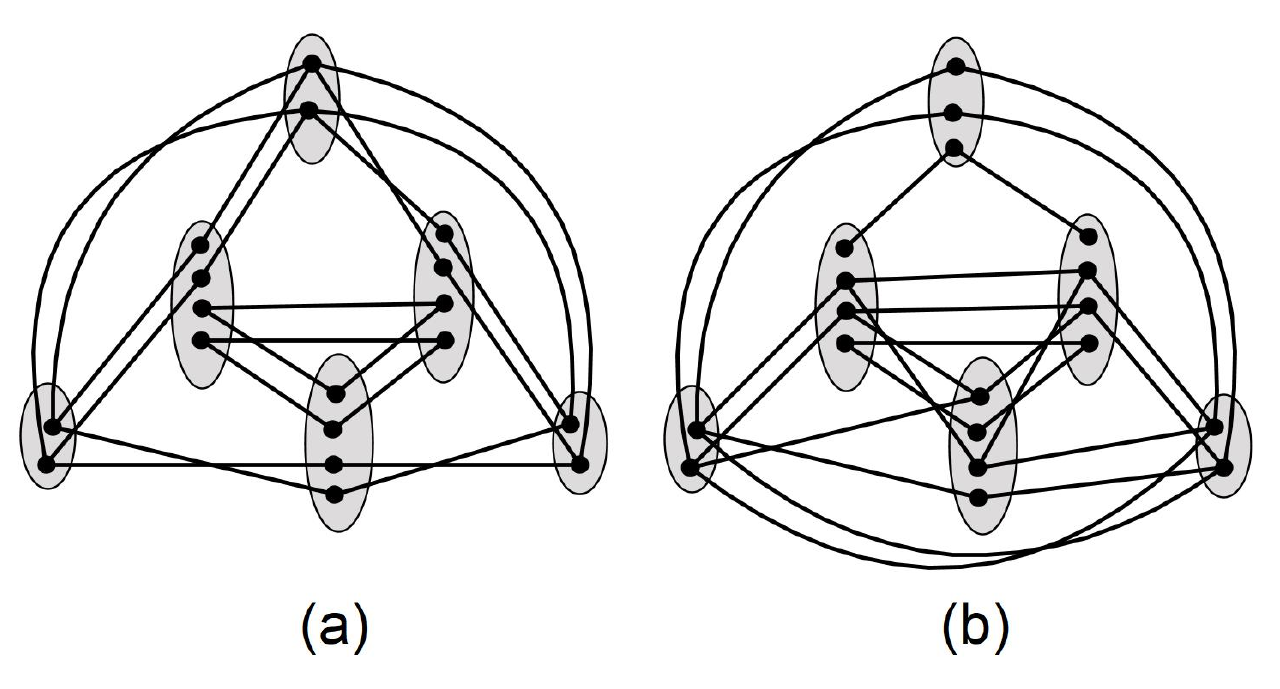}
\caption{}
\label{cluster-two}
\end{figure}

\noindent {\bf Acknowledgement.} The work is done while the second author is studying at the College of William and Mary as a visiting student, supported by the Chinese Scholarship Council. Seog-Jin Kim's work was supported by the National Research Foundation of Korea(NRF) grant funded by the Korea government(MSIT) (NRF-2018R1A2B6003412). Gexin Yu's work was supported in part by the Natural Science Foundation of China (11728102) and the NSA grant H98230-16-1-0316.  The authors would like to thank the referees for their valuable comments.

\end{document}